\title[Tangential Calderon-Zygmund estimates]{Global tangential Calderon-Zygmund type estimates for the regional fractional Laplacian}
\author{S. Khomrutai}
\address[Sujin Khomrutai]{Department of Mathematics and Computer Science, Faculty of Science, Chulalongkorn University, Bangkok 10330, Thailand}
\email{sujin.k@chula.ac.th}
\author{A. Schikorra}
\address[Armin Schikorra]{Department of Mathematics,
University of Pittsburgh,
301 Thackeray Hall,
Pittsburgh, PA 15260, USA}
\email{armin@pitt.edu}
\author{A. Seesanea}
\address[Adisak Seesanea]{Sirindhorn International Institute of Technology, Thammasat University, Pathum Thani 12120,Thailand}
\email{adisak.see@siit.tu.ac.th}
\author{S. Yeepo}
\address[Sasikarn Yeepo]{Sirindhorn International Institute of Technology, Thammasat University, Pathum Thani 12120,Thailand}
\email{sasikarn.y@g.siit.tu.ac.th}
\newcommand{\N}{{\mathbb N}}
\newtheorem{theorem}{Theorem}
\newtheorem{lemma}[theorem]{Lemma}
\newtheorem{proposition}[theorem]{Proposition}
\theoremstyle{definition}
\theoremstyle{remark}
\newcommand\supp{{\rm supp\,}}
\newcommand{\R}{\mathbb{R}}
\newcommand{\brac}[1]{\left (#1 \right )}
\newcommand{\abs}[1]{\left\lvert #1 \right \rvert}
\newcommand{\sm}{\setminus}
\newcommand{\barint}{
\rule[.036in]{.12in}{.009in}\kern-.16in \displaystyle\int }
\newcommand{\barcal}{\text{$ \rule[.036in]{.11in}{.007in}\kern-.128in\int $}}
\def\mvint_#1{\mathchoice
          {\mathop{\vrule width 6pt height 3 pt depth -2.5pt
                  \kern -8pt \intop}\nolimits_{\kern -3pt #1}}%
%%%% P.S., 01/03/2001
% old definition had ...\nolimits_{#1}}
% \kern -3pt makes nicer distances between the integral sign
% and the domain of integration
%%%%
          {\mathop{\vrule width 5pt height 3 pt depth -2.6pt
                  \kern -6pt \intop}\nolimits_{#1}}%
          {\mathop{\vrule width 5pt height 3 pt depth -2.6pt
                  \kern -6pt \intop}\nolimits_{#1}}%
          {\mathop{\vrule width 5pt height 3 pt depth -2.6pt
                  \kern -6pt \intop}\nolimits_{#1}}}
\numberwithin{theorem}{section} \numberwithin{equation}{section}
\newcommand{\lap}{\Delta }
\newcommand{\aleq}{\lesssim}
\newcommand{\aeq}{\approx}
\newcommand{\laps}[1]{(-\Delta)^{\frac{#1}{2}}}
\newcommand{\Ds}[1]{|D|^{#1}}
\newcommand{\Dels}[1]{(-\Delta)^{#1}}
\newcommand{\lapms}[1]{\mathcal{I}_{#1}}
\def\avint{\,\ThisStyle{\ensurestackMath{%
			\stackinset{c}{.2\LMpt}{c}{.5\LMpt}{\SavedStyle-}{\SavedStyle\phantom{\int}}}%
		\setbox0=\hbox{$\SavedStyle\int\,$}\kern-\wd0}\int}
\let\latexchi\chi
\renewcommand\chi{\@ifnextchar_\sub@chi\latexchi}
\newcommand{\sub@chi}[2]{% #1 is _, #2 is the subscript
  \@ifnextchar^{\subsup@chi{#2}}{\latexchi^{}_{#2}}%
}
\newcommand{\subsup@chi}[3]{% #1 is the subscript, #2 is ^, #3 is the superscript
  \latexchi_{#1}^{#3}%
}
\begin{document}
\begin{abstract}
We discuss tangential Sobolev-estimates up to the boundary for solutions to the regional fractional Laplacian on the upper half-plane. These estimates can be used to reduce the boundary Calderon-Zygmund theory of any dimension to a one-dimensional nonlocal problem.
\end{abstract}

\maketitle 
\tableofcontents

\section{Introduction}
In this note we are interested in global Calderon-Zygmund estimates for the regional Laplace equation
\[
 \begin{cases}
\Dels{s}_{\R^n_+} u = g \quad &\text{in $\R^n_+$},\\
u = 0 \quad &\text{on $\partial \R^{n-1} \times \{0\}$},
 \end{cases}
\]
in distributional sense, i.e. $u \in W^{s,2}_{0}(\R^n_+)$ solving 
\[
 \int_{\R^n_+}\int_{\R^n_+} \frac{(u(x)-u(y))(\varphi(x)-\varphi(y))}{|x-y|^{n+2s}}\, dx\, dy = g[\varphi] \quad \forall \varphi \in C_c^\infty(\R^n_+).
\]

Although there has been substantial progress for boundary regularity for the \emph{global} fractional Laplacian, see e.g. \cite{AFLY21,LL2022}, or \emph{H\"older continuity} up to the boundary \cite{Fall22,FRO22}, and boundary estimates for the \emph{regional} fractional Laplacian \cite{ChenHuyuan15}, for global Sobolev-space estimates up to the boundary for equations involving the regional fractional Laplacian as above the literature seems to be scarce. 

On the other hand, in particular for bootstrapping regularity for real-life nonlocal differential equation, such ``finer than Schauder''-estimates up to the boundary could play a crucial role.

One substantial issue with the fractional Laplacian is that there is no satisfying reflection principle known -- i.e. if $\laps{s}_{\R^n_+} u = f$ in $\R^n_+$, it is not clear how to extend $u$ to $\R^n_-$ to obtain a useful global equation. If one had such a reflection principle, then boundary regularity estimate would be consequence of interior regularity estimates. Another classical approach for boundary regularity is to estimate the Green's function, cf. \cite{LL2022}. But it seems that the regional fractional Laplacian has a more complicated Green's function that the global fractional Laplacian. There is also theory via Fourier analysis as developed e.g. in \cite{Grubb15} -- see also the recent lecture notes \cite{Grubb22}.

In this note we follow the spirit of a simple approach to reduce boundary regularity to a one-dimensional ODE. For the usual Laplacian it goes as follows. Assume $u$ is a solution to
\[
 \begin{cases}
  \lap u = f\quad &\text{in $\R^n_+$}\\
  u = 0 \quad &\text{on $\R^{n-1} \times \{0\}$}.
 \end{cases}
\]
By tangential differentiation, i.e. considering that for $i=1,\ldots,n-1$
\[
 \begin{cases}
  \lap \partial_{i} u = \partial_{i} f\quad \text{in $\R^n_+$}\\
  u = 0 \quad \text{on $\R^{n-1} \times \{0\}$}
 \end{cases}
\]
we naturally obtain estimates of $\nabla \partial_i u$ (e.g. from variational observations). Thus we obtain a control of $\partial_{ij} u$ where $(i,j) \neq (n,n)$.

To obtain a control of $\partial_{nn} u$ we then use the equation, namely we obtain global regularity from the (essentially one-dimensional) equation for fixed $x' \in \R^{n-1}$
\[ \partial_{nn} u(x',\cdot) = \lap u(x',\cdot) - \sum_{i=1}^{n-1} \partial_{ii} u (x',\cdot) \quad \text{in $(0,\infty)$}. \]

In principle it seems that this approach works in the nonlocal setting as well, and this is the main point of our note. However, not surprisingly, for the case of a fractional Laplacian the one-dimensional situation is not an ODE in normal direction, but we obtain a nonlocal one-dimensional differential equation -- which, for now, we cannot treat further. For simplicity, we shall only focus on the model equation in the upper half-plane, which can be easily extended to more general situations. Our first main result is the following tangential estimate.

\begin{theorem}[Tangential estimate]\label{th:tangential}
Let $s > \frac{1}{2}$ and assume $p \in (2,\infty)$ and $t \in [0,s)$.

Assume $u \in W^{s,2}_0(\R^n_+)$ satisfies the following equation in distributional sense
\begin{equation}\label{eq:ourpde}
 \Dels{s}_{\R^n_+} u = g \quad \text{in $\R^n_+$},
\end{equation}
% where $g \in \brac{W^{s,2}(\R^n)}^\ast$, i.e.
% \[
%  g[\varphi] \aleq \|g\|_{\brac{W^{s,2}(\R^n)}^\ast}\, \|\varphi\|_{W^{s,2}(\R^n)} \quad \text{for all $\varphi \in C_c^\infty(\R^n)$}
% \]
for $g \in H^{-t,p}(\R^n)$, that is
\[
 g[\varphi] = \int G \Ds{t} \varphi \quad \text{for all $\varphi \in C_c^\infty(\R^n)$}
\]
for some $G \in L^p(\R^n)$.

Then if $u \in L^p(\R^n_+)$ then for any $\tilde{s} \in (s,1)$, $\tilde{s} < 2s-t$, we have
\[
 [u]_{W_T^{\tilde{s},p}(\R^n_+)} \aleq \|G\|_{L^p(\R^n)}
\]
where
\[
\begin{split}
[u]_{W_T^{\tilde{s},p}(\R^n_+)} :=& \brac{\int_{\R^n_+} \int_{\R^n_+} \frac{|u(x',{x_n})-u(y',{x_n})|^p}{|x-y|^{n+\tilde{s}p}} dx' dx_n dy' dy_n}^{\frac{1}{p}}\\
\equiv&c\brac{\int_{0}^\infty \int_{\R^{n-1}} \int_{\R^{n-1}} \frac{|u(x',{x_n})-u(y',{x_n})|^p}{|x'-y'|^{n-1+\tilde{s}p}} dx' dy'  dx_n}^{\frac{1}{p}}.\\
\end{split}
\]
\end{theorem}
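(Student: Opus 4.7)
The strategy, suggested in the introduction, is to exploit tangential translation invariance. For $h \in \R^{n-1}$ identified with $(h, 0) \in \R^n$, set $v_h(x) := u(x+h) - u(x)$ for $x \in \R^n_+$. Since the kernel $|x-y|^{-n-2s}$ and the domain $\R^n_+$ are invariant under such translations, and $u$ vanishes on $\partial \R^n_+$, the difference $v_h$ lies in $W^{s,2}_0(\R^n_+) \cap L^p(\R^n_+)$ and, by a change of variables in the variational formulation, satisfies distributionally
\[
\Dels{s}_{\R^n_+} v_h = g_h \quad \text{in } \R^n_+, \qquad g_h[\varphi] := \int_{\R^n} (G(\cdot + h) - G(\cdot))\, \Ds{t} \varphi,
\]
so that $g_h$ has the same form as $g$, with density $G(\cdot+h) - G$ in place of $G$.

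A change of variables $h := y' - x'$ in the integral defining $[u]_{W_T^{\tilde{s}, p}(\R^n_+)}$ rewrites the tangential seminorm as an integrated $L^p$-difference,
\[
[u]_{W_T^{\tilde{s}, p}(\R^n_+)}^p = \int_{\R^{n-1}} \frac{\|v_h\|_{L^p(\R^n_+)}^p}{|h|^{n-1 + \tilde{s} p}}\, dh,
\]
so the theorem is equivalent to controlling the right-hand side by $\|G\|_{L^p(\R^n)}^p$.

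The main analytic ingredient is then an $L^p$-stability estimate for the regional Dirichlet problem: for $v \in W^{s,2}_0(\R^n_+) \cap L^p(\R^n_+)$ solving $\Dels{s}_{\R^n_+} v = \Ds{t} F$ with $F \in L^p(\R^n)$, one aims for a bound of the form
\[
\|v\|_{L^p(\R^n_+)} \aleq \|\Ds{-\tilde{s}} F\|_{L^p(\R^n)},
\]
which is consistent with the expected regularity gain since $\tilde{s} < 2s - t$. Applied to $v_h$ with $F = G(\cdot + h) - G$ and using that $\Ds{-\tilde{s}}$ commutes with translations, this produces
\[
\|v_h\|_{L^p(\R^n_+)} \aleq \|H(\cdot + h) - H\|_{L^p(\R^n)}, \qquad H := \Ds{-\tilde{s}} G.
\]
Substituting into the seminorm identity and noting that, for $p \in (1,\infty)$ and $\tilde{s} \in (0,1)$, the resulting tangential difference integral is equivalent (via a Mihlin multiplier argument) to $\|(-\Delta_{x'})^{\tilde{s}/2} H\|_{L^p(\R^n)}^p$, which is in turn controlled by $\|\Ds{\tilde{s}} H\|_{L^p}^p = \|G\|_{L^p}^p$, closes the estimate.

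The main obstacle is establishing the $L^p$-stability estimate itself. There is no reflection principle, the Green's function of $\Dels{s}_{\R^n_+}$ is intricate, and the natural $L^2$ energy estimate (from testing the equation with $v$) does not extend to $L^p$ for $p > 2$. One must combine interior Calder\'on--Zygmund theory for the global fractional Laplacian with a careful analysis of the boundary contributions near $\{x_n = 0\}$, exploiting both $v \in W^{s,2}_0$ and the a priori hypothesis $u \in L^p$ to absorb lower-order terms. The restrictions $s > 1/2$ and $p > 2$ enter naturally here; this step encapsulates the main technical difficulty of the result.
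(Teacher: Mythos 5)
Your reduction of the tangential seminorm to an integral of $\|\delta_h u\|_{L^p(\R^n_+)}^p$ over tangential shifts $h$, and your observation that $\delta_h u$ solves the same type of equation with right-hand side $\Ds{t}(\delta_h G)$, both match the paper's starting point (cf.\ Lemma~\ref{la:tangentialest} and the beginning of the proof of Proposition~\ref{pr:maxfctest}). But the step you label as ``the main analytic ingredient'' --- the $L^p$-stability estimate
\[
\|v\|_{L^p(\R^n_+)} \aleq \|\Ds{-\tilde{s}} F\|_{L^p(\R^n)}
\quad\text{for $v$ solving}\quad \Dels{s}_{\R^n_+} v = \Ds{t} F
\]
--- is precisely the content of the theorem, not a tool for proving it, and you do not prove it. In fact such a bound, combined with your Mihlin argument, would yield a \emph{full} (not merely tangential) Calder\'on--Zygmund estimate for the regional operator; the paper explicitly explains why this is beyond reach (no reflection principle, complicated Green's function) and why it settles for a tangential result. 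Deferring the whole difficulty to an unproven claim is a genuine gap.

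The paper instead proves the $L^p$ control of $\delta_h u$ by a quantitative comparison scheme in the spirit of Dong--Kim: on each ball $B(x_0,\kappa r)^+$ it constructs a compactly supported local solution $w$ carrying the inhomogeneity (Lemmas~\ref{la:localsol} and~\ref{la:globalsol3}), so that $\delta_{2,h}u - \delta_{2,h}w$ solves a homogeneous (or bounded right-hand side) regional equation; the interior/boundary H\"older estimate from \cite{Fall22} (Lemma~\ref{la:sharpest2}, which is where $s>\tfrac12$ enters) controls the oscillation of the homogeneous part, while $L^2$ energy estimates for $w$ produce the $|h|^{2s-t}$ gain. This yields the pointwise sharp maximal function bound of Proposition~\ref{pr:maxfctest}, after which the censored Fefferman--Stein inequality (Theorem~\ref{th:feffermanstein}) and the Hardy--Littlewood maximal theorem give $\|\delta_{2,h}u\|_{L^p}\aleq |h|^{2s-t}\|G\|_{L^p}$; the embedding Lemmas~\ref{la:del2todel1} and~\ref{la:tangentialest} then pass from second differences to the Gagliardo seminorm. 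Note also that the paper works with the \emph{second} difference $\delta_{2,h}$ rather than your first difference $\delta_h$, because the localization error (the commutator term in Lemma~\ref{la:globalsol3}) decays like $|h|^2$ only after symmetrization; a first-difference analogue would lose a power of $|h|$ and spoil the gain. You would need to supply an argument of comparable depth to replace this machinery --- the Mihlin multiplier step at the end of your proposal is global Fourier analysis on $\R^n$ and cannot by itself handle the boundary term in the regional bilinear form.
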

Here $\laps{s}_{\R^n_+}$ denotes the regional fractional Laplacian 
\[
 \Dels{s}_{\R^n_+} u [\varphi] = \int_{\R^n_+} \int_{\R^n_+} \frac{(u(x)-u(y)) (\varphi(x)-\varphi(y))}{|x-y|^{n+2s}}\, dx\, dy.
\]

A few remarks are in order: 

Firstly, we observe that $u \in W^{s,2}(\R^n)$ implies that $u  \in L^p(\R^n_+)$, $p \in (2,\frac{n2}{n-2s})$ -- so for $p \in (2,\frac{n2}{n-2s}]$ the above theorem is an \emph{a posteriori estimate}, for $p > \frac{n2}{n-2s}$, \Cref{th:tangential} is an \emph{a priori} estimates. Usually we could try to bootstrap to a posteriori estimate by iteratively using the Sobolev embedding $W^{t,p}(\R^n) \subset L^{\frac{np}{n-tp}}$, but we cannot do this in our setting since we only have a tangential control.

Secondly, observe that in general our argument treats the case when $u$ may not be H\"older continuous, there is no restriction on $p > \frac{n}{2s}$ as e.g. in \cite{Fall22}. Our motivation to study the regularity theory comes from bootstrapping in critical nonlinear equations (e.g. from geometry or topology) where one needs to bootstrap to reach an assumption that the right-hand side belongs to $L^p$ for $p > \frac{n}{2s}$. However, since our argument relies on quantitative estimates of H\"older continuity for bounded right-hand sides, we need to restrict to the case $s > \frac{1}{2}$ where such H\"older continuity is available -- this is yet again somewhat unsatisfying, since it seems reasonable to that some tangential estimates might be true also when $s < \frac{1}{2}$.

Thirdly, we make no effort to obtain sharp results in the sense of $\tilde{s} = 2s-t$ -- while philosophically more appealing, for most applications (e.g. bootstrapping) weaker estimates such as ours usually suffice.

Lastly, we stress that \Cref{th:tangential} is void if the dimension $n=1$. Indeed the main role of \Cref{th:tangential} is (similar to the local case) to \emph{reduce} boundary regularity to the corresponding one-dimensional question. In the local case, one-dimensional PDEs are ODEs, thus with a simple regularity theory. Since we are nonlocal, our one-dimensional PDE is still nonlocal, but with a right-hand side that is as good as the tangential part from \Cref{th:tangential} allows. Precisely we have

\begin{theorem}[Reduction to one-dimensional nonlocal PDE]\label{th:normal}
Assume $s \in (\frac{1}{2},1)$, $\tilde{s} \in (s,1)$ such that $2s-\tilde{s} > \max\{t,\frac{1}{2}\}$. Assume that for $p > 2$,
\[
 u \in W^{s,2}(\R^n_+) \cap W^{\tilde{s},p}_T(\R^n_+)
\]
solves the equation \eqref{eq:ourpde} for $g$ as in \Cref{th:tangential}. Take any $\eta \in C_c^\infty(\R^{n-1})$ and set 
\[
 v(x_n) := \int_{\R^{n-1}} u(z',x_n)\, \eta(z') dz' \quad x_n \in (0,\infty).
\]
Then $v \in W^{s,2}_0((0,\infty))$ solves the one-dimensional equation
\[
 \Dels{s}_{(0,\infty)} v = \tilde{H} \quad \text{in $(0,\infty)$},
\]
where $\tilde{H} \in \brac{W^{2s-\tilde{s},p'}_0((0,\infty))}^{\ast}$ with the estimate
\[
\begin{split}
 |\tilde{H}(\psi)| \aleq& \brac{\|G\|_{L^p(\R^n)} + [u]_{W_T^{\tilde{s},p}(\R^n_+)}} \|\eta\|_{W^{2s-\tilde{s},p'}(\R^{n-1})} \|\psi\|_{W^{2s-\tilde{s},p'}((0,\infty))}.
 \end{split}
\]
\end{theorem}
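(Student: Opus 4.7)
The plan is to test the equation $\Dels{s}_{\R^n_+}u = g$ with the tensor product $\Phi(x',x_n) := \eta(x')\psi(x_n)$ for $\psi \in C_c^\infty((0,\infty))$, and then extract the one-dimensional equation for $v$ via Fubini. The regularity $v \in W^{s,2}_0((0,\infty))$ follows directly from $u \in W^{s,2}_0(\R^n_+)$ by Minkowski in the Gagliardo seminorm (equivalently, via Plancherel, using $|\xi_n|^{2s} \leq |\xi|^{2s}$). To derive the one-dimensional equation, I decompose
\[
u(x)-u(y) = \underbrace{[u(x',x_n)-u(y',x_n)]}_{T_u} + \underbrace{[u(y',x_n)-u(y',y_n)]}_{N_u},
\]
\[
\Phi(x)-\Phi(y) = \underbrace{\eta(y')[\psi(x_n)-\psi(y_n)]}_{\Phi_N} + \underbrace{\psi(x_n)[\eta(x')-\eta(y')]}_{\Phi_T},
\]
so that $(u(x)-u(y))(\Phi(x)-\Phi(y))$ expands into four cross-terms. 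Only the aligned term $N_u\,\Phi_N$ collapses to a 1D object: integrating $x'$ out of $K(x-y) = |x-y|^{-(n+2s)}$ via $\int_{\R^{n-1}}(|w'|^2+r^2)^{-(n+2s)/2}\,dw' = c_n|r|^{-(1+2s)}$, followed by $\int\eta(y')[u(y',x_n)-u(y',y_n)]\,dy' = v(x_n)-v(y_n)$, yields exactly $c_n\,\Dels{s}_{(0,\infty)}v[\psi]$. Setting $\tilde H[\psi] := c_n^{-1}(g[\eta\psi] - (a)-(b)-(d))$, where $(a),(b),(d)$ denote the three remaining cross-terms, then gives $\Dels{s}_{(0,\infty)}v[\psi] = \tilde H[\psi]$ by construction.

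To estimate $\tilde H[\psi]$, I bound $|g[\eta\psi]| \leq \|G\|_{L^p}\|\Ds{t}(\eta\psi)\|_{L^{p'}}$ by $\|G\|_{L^p}\|\eta\|_{W^{2s-\tilde s,p'}(\R^{n-1})}\|\psi\|_{W^{2s-\tilde s,p'}((0,\infty))}$ using $t < 2s-\tilde s$ and a tensor-product Sobolev inequality for $\eta\psi$. The ``easy'' cross-terms $(a) = \int K T_u\Phi_N$ and $(b) = \int K T_u\Phi_T$ feature a tangential $u$-difference, so $[u]_{W_T^{\tilde s,p}}$ is directly available. For $(a)$, I split $|x-y|^{-(n+2s)} = |x-y|^{-\alpha}\cdot|x-y|^{-\beta}$ with $\alpha = n/p + \tilde s$, $\beta = n/p' + (2s-\tilde s)$, then apply H\"older with exponents $(p,p')$ and integrate out $y_n$ on the $u$-side (producing the weight $|x'-y'|^{-(n-1+\tilde s p)}$) and $x'$ on the $\psi$-side (producing $|x_n-y_n|^{-(1+(2s-\tilde s)p')}$) to obtain $|(a)|\lesssim [u]_{W_T^{\tilde s,p}}\|\eta\|_{L^{p'}}[\psi]_{W^{2s-\tilde s,p'}}$. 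For $(b)$, I first integrate $y_n$ out of $K$ (obtaining the purely tangential kernel $|x'-y'|^{-(n-1+2s)}$), then apply a tangential H\"older at fixed $x_n$ followed by one more H\"older in $x_n$ (pairing $\|\psi\|_{L^{p'}}$ against $[u(\cdot,x_n)]_{W^{\tilde s,p}(\R^{n-1})}$) to conclude $|(b)|\lesssim [u]_{W_T^{\tilde s,p}}[\eta]_{W^{2s-\tilde s,p'}}\|\psi\|_{L^{p'}}$.

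The main obstacle is the cross-term
\[
(d) = \int_{\R^n_+}\int_{\R^n_+} K(x-y)[u(y',x_n)-u(y',y_n)]\,\psi(x_n)\,[\eta(x')-\eta(y')]\,dxdy,
\]
which \emph{a priori} features a normal difference of $u$ that is not controlled by the tangential estimate. The resolution exploits the symmetry identity $\int_{\R^{n-1}}\int_{\R^{n-1}} K(x-y)[\eta(x')-\eta(y')]\,dx'dy' = 0$ (from $K$ being symmetric in $(x',y')$ while $\eta(x')-\eta(y')$ is antisymmetric). Fixing once and for all some $\zeta \in C_c^\infty(\R^{n-1})$ with $\int \zeta = 1$ and setting $\tilde v(x_n) := \int \zeta(z')u(z',x_n)\,dz'$, one has the identity
\[
u(y',x_n)-u(y',y_n) = [\tilde v(x_n)-\tilde v(y_n)] + \int \zeta(z')\bigl\{[u(y',x_n)-u(z',x_n)] - [u(y',y_n)-u(z',y_n)]\bigr\}\,dz',
\]
which expresses the normal $u$-difference as an $(x',y')$-independent piece (whose contribution to $(d)$ vanishes by the symmetry identity) plus an integral of purely \emph{tangential} $u$-differences at the two heights $x_n$ and $y_n$. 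A final H\"older argument analogous to $(b)$, with an inner H\"older in $z'$ that trades the weight $|y'-z'|^{\tilde s + (n-1)/p}$ between $\zeta$ and the tangential $u$-seminorm (absorbing the fixed $\zeta$-norms into the implicit constant), then closes $|(d)|\lesssim [u]_{W_T^{\tilde s,p}}\|\eta\|_{W^{2s-\tilde s,p'}}\|\psi\|_{L^{p'}}$. Summing the four bounds and using the continuous inclusion $\|\cdot\|_{L^{p'}} \leq \|\cdot\|_{W^{2s-\tilde s,p'}}$ yields the claimed estimate on $|\tilde H[\psi]|$.
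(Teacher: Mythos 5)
Your overall strategy coincides with the paper's: test the weak formulation with the tensor product $\Phi=\eta\psi$, split both $u(x)-u(y)$ and $\Phi(x)-\Phi(y)$ into tangential and normal increments, recognize the aligned ``normal $\times$ normal'' piece as $c\,\Dels{s}_{(0,\infty)}v[\psi]$ after integrating $x'$ out of the kernel, and estimate the remaining pieces. The regularity $v\in W^{s,2}_0((0,\infty))$ via Minkowski, the bound for $g[\eta\psi]$ via the tensor-product Sobolev estimate for $\eta\psi$, and the treatment of your terms $(a)$, $(b)$ all parallel the paper's Section 6.

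Where you go beyond the paper is the cross-term
\[
(d)=\int_{\R^n_+}\!\int_{\R^n_+}\frac{[u(y',x_n)-u(y',y_n)]\,\psi(x_n)\,[\eta(x')-\eta(y')]}{|x-y|^{n+2s}}\,dx\,dy,
\]
the pairing of a \emph{normal} $u$-increment with the \emph{tangential} part of $\Phi(x)-\Phi(y)$. This term genuinely appears when one plugs $\varphi=\eta\psi$ into \eqref{eq:newpde}: the left-hand side of \eqref{eq:newpde} equals $c\,\Dels{s}_{(0,\infty)}v[\psi]$ \emph{plus} $(d)$, not the 1D bilinear form alone, and $(d)$ is not controlled on its face by $[u]_{W^{\tilde s,p}_T}$ since it contains a normal increment. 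The paper's write-up defines $\tilde H(\psi):=\tilde G(\eta\psi)$ and immediately states that \eqref{eq:newpde} ``becomes'' $\Dels{s}_{(0,\infty)}v[\psi]=\tilde H(\psi)$, without visibly addressing $(d)$. Your centering trick — subtracting the $y'$-independent profile $\tilde v(x_n)=\int\zeta u(\cdot,x_n)$ so that its contribution to $(d)$ vanishes by the exact antisymmetry $\int\!\int K(x-y)[\eta(x')-\eta(y')]\,dx'\,dy'=0$, and expressing the remainder through \emph{tangential} $u$-increments at heights $x_n$ and $y_n$ — supplies exactly the missing justification. It is the right idea, and the fact that $(d)$ vanishes for $u$ independent of $x'$ is a useful sanity check that your decomposition is correct.

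Two places to write out carefully so the argument does not quietly break. First, in $(d)$ after centering, the tangential $u$-increments live at scale $|y'-z'|$ while the kernel (once $y_n$ is integrated out) carries $|x'-y'|^{-(n-1+2s)}$; the inner H\"older in $z'$ must be stated explicitly, using that $\zeta$ has compact support so $|y'-z'|$ is bounded on $\supp\zeta$, and the region $|x'-y'|\gtrsim 1$ (where $\eta(x')-\eta(y')$ is merely bounded) needs $2s-\tilde s<1$ for convergence. Second, the hypothesis $2s-\tilde s>\frac12$ should be invoked where the paper uses it — namely to pass from $[\,\cdot\,]_{W^{2s-\tilde s,p'}(\R^n)}$ of the zero-extended test function to the half-space seminorm via \Cref{la:Ws0halfspace} — and your argument should use it at the analogous point.
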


The outline of the remaining paper is as follows: in \Cref{s:preliminaries} we introduce notation and preliminary arguments. In \Cref{s:L2est} we prove $L^2$-estimates, which in combination with the known H\"older regularity estimates leads to maximal function estimates in \Cref{s:maxfctest}. This is in the spirit of the arguments in \cite{DK12}, however we use discrete differentiation, whose embedding theorems we discuss in \Cref{s:embedding}. We then use the Fefferman-Stein estimate on maximal functions to obtain the proof of \Cref{th:tangential} in \Cref{s:tangential}. In \Cref{s:normal} we discuss the argument for \Cref{th:normal}.

Lastly, let us remark that our arguments could relatively easily extended to more general domains and more general differential operators, e.g. possibly those discussed in \cite{MSY21}. Since even in the simple case our results are limited, we prefer to favor a simpler notation over generality of domains.

\subsection*{Acknowledgment} Discussions with T. Mengesha, M.M. Fall, M. Warma are gratefully acknowledged. 
A substantial part of the research was carried out while A.S. was visiting Chulalongkorn University and Thammasat University, and their respective hospitality is appreciated.
A.S. is funded by Simons foundation grant no 579261 and NSF Career DMS-2044898. 
This paper was supported by Thammasat Postdoctoral Fellowship.

\section{Preliminaries and definitions}\label{s:preliminaries}
Throughout this paper we always assume $n \geq 2$ and $s \in (\frac{1}{2},1)$.
We denote by $\R^n_+$ the upper halfspace, and $\R^n_-$ the lower halfspace, i.e.
\[
 \R^n_+ := \{x = (x_1,\ldots,x_n) \in \R^n: \quad x_n > 0\}
 \]
 and
 \[ \R^n_- := \{x = (x_1,\ldots,x_n) \in \R^n: \quad x_n < 0\}
\]
and for any $\Omega \subset \R^n$, we set $\Omega_+ := \Omega \cap \R^n_+$ and $\Omega_{-} := \Omega \cap \R^{n}_-$.

For two open sets $\Omega, \Omega_1 \subset \R^n$ (typically, $\Omega_1 \subset \Omega$), we say
\[
\Dels{s}_{\Omega} u = f \quad \text{in $\Omega_1$},
\]
if for any $\varphi \in C_c^\infty(\Omega_1)$, we have
\[
\int_{\Omega} \int_{\Omega} \frac{\brac{u(x)-u(y)}\brac{\varphi(x)-\varphi(y)}}{|x-y|^{n+2s}}\, dx\, dy = f[\varphi].
\]

We define now the seminorm 
\[
 [f]_{W^{s,2}(\Omega)} := \brac{\int_{\Omega} \int_{\Omega} \frac{|f(x)-f(y)|^2}{|x-y|^{n+2s}}\, dx\, dy}^{\frac{1}{2}}
\]
and the norm
\[
 \|f\|_{W^{s,2}(\Omega)} := \|f\|_{L^2(\Omega)} + [f]_{W^{s,2}(\Omega)}.
\]
We say that $f \in \dot{W}^{s,2}(\Omega)$, or $W^{s,2}(\Omega)$ if and only $f$ is measurable and the above norm is finite. 
Moreover, we denote by
\[
W^{s,2}_0(\Omega) := \left \{v \in \dot{W}^{s,2}(\R^n) \cap L^{2}(\R^n), \quad v \equiv 0 \ \text{in $\R^n \sm \Omega$} \right \}
\]
and
\[
\mathcal{W}^{s,2}_0(\Omega) := \left \{v \in \dot{W}^{s,2}(\R^n) \cap L^{\frac{ns}{n-2s}}(\R^n), \quad v \equiv 0 \ \text{in $\R^n \sm \Omega$} \right \}.
\]

On $\R^n$ we define for $t >0$ the operators $\laps{t}$ and $\Ds{t}$ either by Fourier transform
\[
 \laps{t} f \equiv \Ds{t} f = \mathcal{F}^{-1} (c_1 |\xi|^t \mathcal{F} f),
\]
or, equivalently, via the integral representation. Namely, if $t \in (0,1)$
\[
 \laps{t} f(x) \equiv \Ds{t} f(x) = c_2 \int_{\R^n} \frac{f(y)-f(x)}{|x-y|^{n+t}}\, dy = c_2 \int_{\R^n} \frac{f(x+h)-f(x)}{|h|^{n+t}}\, dh,
\]
and if $t \in (0,2)$
\[
 \laps{t} f(x) \equiv \Ds{t} f(x) = \frac{c_2}{2}  \int_{\R^n} \frac{f(x+h)+f(x-h)-2f(x)}{|h|^{n+t}}\, dh.
\]
The inverse operator $\lapms{t} = \Ds{-t}$ can also written in potential form,
\[
 \lapms{t} f(x) \equiv \Ds{-t} f(x) = c_3 \int_{\R^n} |x-y|^{t-n}\, f(y)\, dy.
\]

\begin{lemma}\label{la:Ws0halfspace}
Let $s \in (\frac{1}{2},1)$ and $v \in W^{s,2}_0(\R^n_+)$. Then, 
\begin{equation}\label{eq:vws0est}
 [v ]_{W^{s,2}(\R^n)} \aleq [v]_{W^{s,2}(\R^n_+)} \quad \forall v \in W^{s,2}_0(\R^n_+).
\end{equation}
\end{lemma}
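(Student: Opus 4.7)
The plan is to split the full-space Gagliardo integral defining $[v]^2_{W^{s,2}(\R^n)}$ into three pieces according to the signs of $x_n$ and $y_n$, and thereby reduce the claim to a fractional Hardy inequality on the half-space. Since $v\equiv 0$ on $\R^n_-$, the integral over $\R^n_-\times \R^n_-$ vanishes, the integral over $\R^n_+\times \R^n_+$ equals $[v]^2_{W^{s,2}(\R^n_+)}$, and the two symmetric mixed integrals combine into
\[
2\int_{\R^n_+}\int_{\R^n_-}\frac{|v(x)|^2}{|x-y|^{n+2s}}\,dy\,dx.
\]

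Next I would compute the inner integral explicitly: for fixed $x=(x',x_n)\in\R^n_+$, integrating first over the $(n-1)$ tangential variables $y'\in\R^{n-1}$ by a rescaling, and then substituting $z=x_n-y_n$ in the remaining one-dimensional integral, yields
\[
\int_{\R^n_-}\frac{dy}{|x-y|^{n+2s}}=\frac{c_{n,s}}{x_n^{2s}}.
\]
Consequently, the lemma reduces to a fractional Hardy inequality on the half-space,
\[
\int_{\R^n_+}\frac{|v(x)|^2}{x_n^{2s}}\,dx\ \aleq\ [v]^2_{W^{s,2}(\R^n_+)},
\]
which, once available, yields the claim after adding $[v]^2_{W^{s,2}(\R^n_+)}$ to both sides.

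The hypothesis $s>1/2$ is essential at this last step, and establishing this Hardy inequality is the main obstacle. It is precisely the regime in which $\dot W^{s,2}$-functions admit a well-defined trace on $\{x_n=0\}$, and the assumption $v\in W^{s,2}_0(\R^n_+)$, i.e.\ that the zero-extension of $v$ lies in $\dot W^{s,2}(\R^n)$, forces this trace to vanish -- exactly what is needed for the inequality to hold (constants with nonzero trace are obvious obstructions). I would invoke the classical fractional Hardy inequality on the half-space (valid whenever $2sp\neq 1$, here with $p=2$), which is standard for test functions in $C_c^\infty(\R^n_+)$ and then extends to $W^{s,2}_0(\R^n_+)$ by a density argument built into the definition of $W^{s,2}_0(\R^n_+)$.
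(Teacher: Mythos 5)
Your proof is correct and takes essentially the same route as the paper's: both decompose the full-space Gagliardo seminorm using $v\equiv 0$ on $\R^n_-$, reduce the mixed term to the Hardy integral $\int_{\R^n_+}|v(x)|^2 x_n^{-2s}\,dx$, and close with Dyda's fractional Hardy inequality, which is exactly where $s>1/2$ enters. One minor slip: the parenthetical condition for the Hardy inequality should read $sp\neq 1$ (so $2s\neq 1$ for $p=2$), not $2sp\neq 1$.
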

\begin{proof}
Since $v \equiv 0$ in $\R^{n}_-$, we have 
\[
 [v ]_{W^{s,2}(\R^n)} \aleq \left \|\frac{v}{\delta^s}\right \|_{L^2(\R^n_+)} + [v]_{W^{s,2}(\R^n_+)},
\]
where for $x = (x',x_n) \in \R^n \times (0,\infty)$ we have $\delta(x) = x_n$. Since $s > 1/2$, we use the fractional Hardy's inequality, \cite[Theorem 1.1.]{Dyda04}, on the first term in the equation above to conclude
\[
 [v ]_{W^{s,2}(\R^n)} \aleq [v]_{W^{s,2}(\R^n_+)}.\qedhere
\]
\end{proof}

We now define the usual maximal function
\[
 \mathcal{M} g(x) := \sup_{r >0} r^{-2n} \int_{B(x,r)} |g(y)| dy, \quad x \in \R^n
\]
and the sharp maximal function
\[
 \mathcal{M}^\# g(x) := \sup_{r >0} r^{-2n} \int_{B(x,r)} \, \int_{B(x,r)} |g(y)-g(z)|\, dy\, dz, \quad x \in \R^n
\]
as well as the censored versions
\[
 \mathcal{M}_+ g(x) := \sup_{r >0} r^{-2n} \int_{B(x,r) \cap \R^n_+} |g(y)| dy, \quad x \in \R^n_+
\]
and
\[
 \mathcal{M}^\#_{{+}} g(x) := \sup_{r >0} r^{-2n} \int_{B(x,r) \cap \R^n_+} \, \int_{B(x,r) \cap \R^n_+} |g(y)-g(z)|\, dy\, dz \quad x \in \R^n_+.
\]
The Fefferman-Stein theorem \cite{FS72} says that if $f \in L^p(\R^n)$ then 
\begin{equation}\label{eq:globFS}
 \|f\|_{L^p(\R^n)} \leq C(p) \|\mathcal{M}^\# f\|_{L^p(\R^n)}.
 \end{equation}
We remark that this also holds for the censored maximal functions.
\begin{theorem}[Censored Fefferman-Stein theorem]\label{th:feffermanstein}
If $f \in L^p(\R^n_+)$, then for a constant $C = C(p,n)$
\[
 \|f\|_{L^p(\R^n_+)} \leq C\, \|\mathcal{M}^\#_+ f\|_{L^p(\R^n_+)}.
 \]
\end{theorem}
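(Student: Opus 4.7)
The plan is to deduce the censored statement from the classical Fefferman--Stein inequality \eqref{eq:globFS} by applying it to the even reflection of $f$. Define $\tilde f\colon \R^n\to\R$ by $\tilde f(x',x_n):=f(x',|x_n|)$, so that $\|\tilde f\|_{L^p(\R^n)}^p=2\|f\|_{L^p(\R^n_+)}^p$ and $\tilde f\circ R=\tilde f$, where $R(x',x_n):=(x',-x_n)$ denotes the reflection across the boundary. The whole argument then reduces to establishing the pointwise comparison
\begin{equation}\label{eq:fsstarpointwise}
\mathcal M^\#\tilde f(x)\aleq \mathcal M^\#_+ f(x)\qquad\text{for all }x\in\R^n_+.
\end{equation}
Indeed, once \eqref{eq:fsstarpointwise} is known, the symmetry $\mathcal M^\#\tilde f(x)=\mathcal M^\#\tilde f(R(x))$ (which follows from the change of variables $y\mapsto R(y)$, $z\mapsto R(z)$ together with $\tilde f\circ R=\tilde f$) automatically extends the bound to all of $\R^n$, and one concludes with
\[
\|f\|_{L^p(\R^n_+)}\le \|\tilde f\|_{L^p(\R^n)}\aleq \|\mathcal M^\#\tilde f\|_{L^p(\R^n)}\aleq \|\mathcal M^\#_+ f\|_{L^p(\R^n_+)}.
\]

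To prove \eqref{eq:fsstarpointwise}, I would fix $x=(x',x_n)\in\R^n_+$ and $r>0$ and split into two cases. When $r\le x_n$ the ball $B:=B(x,r)$ is contained in $\R^n_+$, so $\tilde f|_B=f|_B$ and the required bound is immediate from the definition of $\mathcal M^\#_+ f(x)$. The interesting case is $r>x_n$, where $B$ straddles the boundary. Writing $B^{\pm}:=B\cap\R^n_{\pm}$, the double integral $\int_B\int_B|\tilde f(y)-\tilde f(z)|\,dy\,dz$ splits into contributions from $B^+\times B^+$, $B^-\times B^-$, and (twice) $B^+\times B^-$. Since $\tilde f(z)=f(R(z))$ for $z\in\R^n_-$, the change of variables $z\mapsto R(z)$ sends $B^-$ onto $B^{**}:=R(B^-)\subset\R^n_+$, and each of the three contributions turns into an integral of $|f(y)-f(w)|$ over $A\times A'$ with $A,A'\in\{B^+,B^{**}\}$.

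The key geometric observation is that $B^+\cup B^{**}\subset B(x,3r)\cap\R^n_+$: for any $w\in B^{**}\subset B(R(x),r)$ one has $|w-x|\le |w-R(x)|+|R(x)-x|<r+2x_n<3r$. Consequently each of the three contributions is dominated by
\[
\int_{B(x,3r)\cap\R^n_+}\int_{B(x,3r)\cap\R^n_+}|f(y)-f(w)|\,dy\,dw\le(3r)^{2n}\,\mathcal M^\#_+ f(x),
\]
so after dividing by $r^{2n}$ and taking the supremum in $r$ one obtains \eqref{eq:fsstarpointwise}. The point requiring care is precisely this boundary-crossing case: one must exploit the even symmetry of $\tilde f$ to reflect the lower portion of $B$ into $\R^n_+$ and then enlarge to a concentric ball, still centered at $x$, so that the resulting average is genuinely controlled by $\mathcal M^\#_+ f$ evaluated at the original point $x\in\R^n_+$. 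Everything else is routine bookkeeping.
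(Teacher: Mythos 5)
Your proposal is correct and follows essentially the same route as the paper: even reflection of $f$, the classical Fefferman--Stein inequality, and a pointwise comparison of $\mathcal M^\#\tilde f$ with $\mathcal M^\#_+ f$. The only cosmetic differences are that the paper, for $x\in\R^n_+$, uses the sharper inclusion $\bigl(B(x,r)^-\bigr)^\ast\subset B(x,r)^+$ (so no enlargement to $B(x,3r)$ is needed), and handles $x\in\R^n_-$ by a parallel computation rather than invoking the reflection symmetry $\mathcal M^\#\tilde f\circ R=\mathcal M^\#\tilde f$; both choices are fine and lead to the same conclusion.
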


\Cref{th:feffermanstein} is a direct consequence of the usual Fefferman-Stein estimate \eqref{eq:globFS} combined with the following observation.
\begin{lemma}\label{la:FefSteinhalf}
For measurable $f: \R^n_+ \to \R$ and we denote the even reflection to $\R^n$ by
\[
 \tilde{f}(x',x_n) := f(x',|x_n|).
\]
Then, for any $p \in [1,\infty)$,
\[
 \|\mathcal{M}^\# {\tilde{f}}\|_{L^p(\R^n)} \aleq
 \|\mathcal{M}^\#_{{+}} f\|_{L^p(\R^n_{{+}})}.
\]
\end{lemma}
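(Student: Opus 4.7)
The strategy is to establish a pointwise bound $\mathcal{M}^\#\tilde{f}(x)\lesssim \mathcal{M}^\#_+ f(\bar{x})$, where $\bar{x}=(x',|x_n|)\in\R^n_+$, and then integrate. Since $\tilde{f}$ is even in $x_n$, the function $\mathcal{M}^\#\tilde{f}$ is also even in $x_n$, so the $L^p$-integral over $\R^n$ contributes at most twice the integral over $\R^n_+$. Thus the whole task reduces to the pointwise comparison for points in $\R^n_+$.

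The key geometric observation is the following: for $x\in\R^n_+$ and any $r>0$, the reflection map $\sigma(y',y_n)=(y',-y_n)$ sends $B(x,r)\cap\R^n_-$ \emph{into} $B(x,r)\cap\R^n_+$. Indeed, for $y_n<0\leq x_n$ one has $(x_n-y_n)^2\geq (x_n+y_n)^2$, so $|\sigma(y)-x|\leq|y-x|<r$, while the first $n-1$ coordinates are unchanged. Denoting $B^\pm:=B(x,r)\cap\R^n_\pm$, this shows $\sigma(B^-)\subset B^+$.

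Now split the double integral defining $\mathcal{M}^\#\tilde{f}(x)$ over $B(x,r)=B^+\cup B^-$ into the four product blocks $B^\varepsilon\times B^{\varepsilon'}$. Since $\tilde{f}(y)=f(\sigma(y))$ when $y_n<0$ and $\tilde{f}(y)=f(y)$ when $y_n>0$, an isometric change of variable $y\mapsto\sigma(y)$ on $B^-$ (and the analogous change on the other factor) turns each of the four blocks into an integral over $\sigma(B^-)\times\sigma(B^-)$, $B^+\times\sigma(B^-)$, $\sigma(B^-)\times B^+$, or $B^+\times B^+$. By the geometric observation, each is dominated by $\int_{B^+}\!\int_{B^+}|f(y)-f(z)|\,dy\,dz$. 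Summing the four blocks and taking the supremum over $r>0$ yields
\[
 \mathcal{M}^\# \tilde{f}(x)\;\leq\;4\,\mathcal{M}^\#_+ f(x)\qquad\text{for every }x\in\R^n_+.
\]

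Finally, using the evenness of $\tilde{f}$ in $x_n$, for $x\in\R^n_-$ we have $\mathcal{M}^\#\tilde{f}(x)=\mathcal{M}^\#\tilde{f}(\bar{x})\leq 4\mathcal{M}^\#_+ f(\bar{x})$. Integrating the $p$-th power over $\R^n$ and changing variables $x_n\mapsto-x_n$ on $\R^n_-$ gives
\[
 \|\mathcal{M}^\#\tilde{f}\|_{L^p(\R^n)}^p\;\leq\;2\cdot 4^p\,\|\mathcal{M}^\#_+ f\|_{L^p(\R^n_+)}^p,
\]
which is the desired estimate. No substantial obstacle is expected; the only delicate point is verifying the containment $\sigma(B^-)\subset B^+$ (so that the reflected integrals can be absorbed into the censored sharp maximal function of $f$), and this is the lone nontrivial algebraic step of the argument.
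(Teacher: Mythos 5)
Your proof is correct and takes essentially the same approach as the paper: both hinge on the reflection change of variables and the containment $\sigma(B(x,r)\cap\R^n_-)\subset B(x,r)\cap\R^n_+$ for $x\in\R^n_+$, yielding $\mathcal{M}^\#\tilde f(x)\leq 4\,\mathcal{M}^\#_+f(x)$ on $\R^n_+$. The only (minor, and slightly cleaner) difference is that you dispatch the lower half-space via the evenness of $\mathcal{M}^\#\tilde f$ in $x_n$, whereas the paper redoes the ball-containment argument directly for $x\in\R^n_-$, using $(B(x,r)^-)^\ast=B(x^\ast,r)^+$; both lead to the same $L^p$ bound after integration.
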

\begin{proof}
For $x = (x_1,\ldots,x_{n-1},x_n)$, we denote the even reflection across $\R^{n-1} \times \{0\}$ by
\[
 x^\ast := (x_1,\ldots,x_{n-1},-x_n).
\]
Similarly, we denote the reflect set by
\[
 A^\ast = \{x^\ast:  x\in A\}.
\]
Let $x \in \R^n_+$ and $r > 0$. Then we consider
\[
\begin{split}
 &r^{-2n} \int_{B(x,r)} \, \int_{B(x,r)} |\tilde{f}(y)-\tilde{f}(z)|\, dy\, dz\\
=&r^{-2n} \int_{B(x,r)^+} \, \int_{B(x,r)^+} |f(y)-f(z)|\, dy\, dz\\
&+r^{-2n} \int_{B(x,r)^-} \, \int_{B(x,r)^-} |f(y^\ast)-f(z^\ast)|\, dy\, dz\\
&+2r^{-2n} \int_{B(x,r)^+} \, \int_{B(x,r)^-} |f(y^\ast)-f(z)|\, dy\, dz\\
=&r^{-2n} \int_{B(x,r)^+} \, \int_{B(x,r)^+} |f(y)-f(z)|\, dy\, dz\\
&+r^{-2n} \int_{\brac{B(x,r)^-}^\ast} \, \int_{\brac{B(x,r)^-}^\ast} |f(y)-f(z)|\, dy\, dz\\
&+2r^{-2n} \int_{B(x,r)^+} \, \int_{\brac{B(x,r)^-}^\ast} |f(y)-f(z)|\, dy\, dz.\\
 \end{split}
 \]
Now we use that
\[
 \brac{B(x,r)^-}^\ast \subset B(x,r)^+ \quad \text{if $x \in \R^n_+$}.
\]
Then we have shown
\[
\begin{split}
 &r^{-2n} \int_{B(x,r)} \, \int_{B(x,r)} |\tilde{f}(y)-\tilde{f}(z)|\, dy\, dz\\
\leq&4r^{-2n} \int_{B(x,r)^+} \, \int_{B(x,r)^+} |f(y)-f(z)|\, dy\, dz.\\
\end{split}
 \]
Thus,
\begin{equation}\label{eq:maxsharp1}
 \mathcal{M}^\# \tilde{f}(x) \aleq \mathcal{M}^\#_+ f(x) \quad \text{for all $x \in \R^n_+$}.
\end{equation}
We assume now $x \in \R^n_-$ and $r > 0$. Using that $B(x,r)^+ \subset (B(x,r)^-)^\ast$, we find 
\[
\begin{split}
 &r^{-2n} \int_{B(x,r)} \, \int_{B(x,r)} |\tilde{f}(y)-\tilde{f}(z)|\, dy\, dz\\
\leq&4r^{-2n} \int_{\brac{B(x,r)^-}^\ast} \, \int_{\brac{B(x,r)^-}^\ast} |{f}(y)-{f}(z)|\, dy\, dz.
 \end{split}
 \]
Moreover, we notice that
\[
\brac{B(x,r)^-}^\ast = B(x^\ast,r)^+.
\]
Thus, 
\[
\begin{split}
 &r^{-2n} \int_{B(x,r)} \, \int_{B(x,r)} |\tilde{f}(y)-\tilde{f}(z)|\, dy\, dz\\
%\leq& 4r^{-2n} \int_{\brac{B(x,r)^-}^\ast} \, \int_{\brac{B(x,r)^-}^\ast} |{f}(y)-{f}(z)|\, dy\, dz\\
 \leq& 4r^{-2n} \int_{B(x^\ast,r)^+} \, \int_{B(x^\ast,r)^+} |{f}(y)-{f}(z)|\, dy\, dz.\\
 \end{split}
 \]
That is, 
\begin{equation}\label{eq:maxsharp2}
 \mathcal{M}^\# \tilde{f}(x) \aleq \brac{\mathcal{M}^\#_+ f}(x^\ast) \quad \text{for all $x \in \R^n_-$}.
\end{equation}
Using \eqref{eq:maxsharp1} and \eqref{eq:maxsharp2} we have
\[
\begin{split}
 \|\mathcal{M}^\# \tilde{f}\|_{L^p(\R^n)} \aleq&\|\mathcal{M}^\# \tilde{f}\|_{L^p(\R^n_+)}+\|\mathcal{M}^\# \tilde{f}\|_{L^p(\R^n_-)}\\
 \aleq&\|\mathcal{M}^\# f\|_{L^p(\R^n_+)}+\|(\mathcal{M}^\# f)(\cdot^\ast)\|_{L^p(\R^n_-)}\\
 =&\|\mathcal{M}^\# f\|_{L^p(\R^n_+)}+\|(\mathcal{M}^\# f)(\cdot)\|_{L^p(\R^n_+)}.
 \end{split}
\]
We can conclude.
\end{proof}

\subsection{Discrete differentiation and embedding estimates}\label{s:embedding}
For $h \in \R^n$, we denote the discrete differentiation operators
\[
 \delta_h f(x) := f(x+h) - f(x)
\]
and 
\[
 \delta_{2,h} f(x) := f(x+h)+f(x-h) - 2f(x).
\]
We record the following well-known embedding estimates.
\begin{lemma}\label{la:embedding1}
Let $h \in \R^{n-1} \times \{0\}$, then
\[
 \|\delta_h f\|_{L^2(\R^n_+)} \aleq |h|^s [f]_{W^{s,2}_+(\R^n_+)}
\]
and if $|h| \leq 1$ and $R > 0$ then
\[
 \|\delta_h f\|_{L^2(B(0,R)^+)} \aleq |h|^s [f]_{W^{s,2}_+(B(0,R+1)^+)}.
\]

\end{lemma}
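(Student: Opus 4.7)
The plan is the classical ``averaging over an intermediate region'' trick, with the geometric input that for \emph{tangential} $h = (h',0)$ translation by $h$ preserves $\R^n_+$ and one can insert an intermediate averaging region that stays inside $\R^n_+$ even when $x$ is arbitrarily close to the boundary. I cannot just invoke a standard Besov-type norm equivalence, because the Gagliardo seminorm on $\R^n_+$ is not a priori comparable to its restriction to tangential translates, so the argument has to be carried out by hand.

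First I build a ``lifted'' intermediate ball. For each $x \in \R^n_+$, set $z_0(x) := (x' + h'/2,\, x_n + |h|)$ and $A(x) := B(z_0(x), |h|/4)$. A direct computation shows $A(x) \subset \R^n_+$ — the $n$-th coordinate of any point of $A(x)$ is at least $x_n + 3|h|/4 > 0$ — and both $|z - x| \leq C|h|$ and $|z - (x+h)| \leq C|h|$ hold for every $z \in A(x)$.

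Next I apply the elementary estimate
\[
|f(x+h) - f(x)|^2 \leq \frac{C}{|h|^n} \int_{A(x)} \Big( |f(x+h) - f(z)|^2 + |f(z) - f(x)|^2 \Big)\, dz
\]
and convert the right-hand side into Gagliardo form using $|z - y|^{n+2s} \leq C|h|^{n+2s}$ for $y \in \{x, x+h\}$, to obtain
\[
|f(x+h)-f(x)|^2 \leq C|h|^{2s} \int_{A(x)} \left( \frac{|f(x+h)-f(z)|^2}{|(x+h)-z|^{n+2s}} + \frac{|f(x)-f(z)|^2}{|x-z|^{n+2s}} \right) dz.
\]
Integrating in $x$ over $\R^n_+$ and substituting $y := x+h$ in the first resulting double integral (a measure-preserving bijection of $\R^n_+$ onto itself since $h$ is tangential), then enlarging $A(x)$ to all of $\R^n_+$, yields the first estimate.

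For the localized version I leave the analytic argument unchanged and only check the geometry: for $|h| \leq 1$ and $x \in B(0,R)^+$, the centre of $A(x)$ lies in $B(0, R + 3|h|/2)$, so after shrinking the radius (e.g.\ replacing $|h|/4$ by $|h|/8$) one has $A(x) \subset B(0, R+1)^+$, and the same chain of inequalities produces the $B(0,R+1)^+$ bound. The main — and really quite mild — obstacle is keeping $A(x)$ strictly inside $\R^n_+$ uniformly in $x$, including for $x_n$ arbitrarily small; the upward shift by $|h|$ in the $n$-th coordinate of $z_0(x)$ takes care of this. Because $h$ is tangential no boundary Hardy inequality or H\"older continuity input is needed.
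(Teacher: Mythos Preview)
Your argument is correct and takes a genuinely different route from the paper. The paper first rescales to $|h|=1$, then on each unit ball $B(x_0,1)^+$ uses the Poincar\'e-type inequality $\|f-(f)_{B(x_0,2)^+}\|_{L^2(B(x_0,2)^+)}\aleq [f]_{W^{s,2}(B(x_0,2)^+)}$ together with the observation that tangential translation by $h$ maps $B(x_0,1)^+$ into $B(x_0,2)^+$, and finally sums over a bounded-overlap covering of $\R^n_+$. Your direct averaging over the lifted ball $A(x)$ bypasses all three steps --- no rescaling, no uniform Poincar\'e inequality on half-balls, no covering --- at the price of the small geometric trick of pushing $A(x)$ upward by $|h|$ so that it stays in $\R^n_+$ uniformly in $x_n$. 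This makes your proof shorter and entirely self-contained. One cosmetic point: with your specific choices $z_0(x)=(x'+h'/2,\,x_n+|h|)$ and radius $|h|/8$, the set $A(x)$ lands in $B(0,R+c)^+$ with $c=\sqrt{5}/2+1/8>1$, so to hit exactly $B(0,R+1)^+$ as stated you should also shrink the vertical lift (e.g.\ take $z_0(x)=(x'+h'/2,\,x_n+|h|/2)$); this changes nothing structurally.
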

\begin{proof}
If $h = 0$, there is nothing to show as both sides are zero. 
Otherwise we observe that w.l.o.g. $|h| =1$. Indeed, assume
\[
 \|\delta_h f\|_{L^2(\R^n_+)} \aleq  [f]_{W^{s,2}_+(\R^n_+)}
\]
is proven for all $f$ and all $h \in \R^{n-1} \times \{0\}$ and $|h| = 1$.

Consider fix $f$ and $h \in \R^{n-1}\times \{0\} \sm \{0\}$. Set $g(x) := f(|h|x)$. From the above we assume we already have
 \[
 \|\delta_{\frac{h}{|h|}} g\|_{L^2(\R^n_+)} \aleq  [g]_{W^{s,2}_+(\R^n_+)},
\]
and thus
 \[
 \|f(|h|\cdot +h)- f(|h|\cdot)\|_{L^2(\R^n_+)} \aleq  |h|^{s-\frac{n}{2}} [f]_{W^{s,2}_+(\R^n_+)},
\]
and thus
 \[
 |h|^{-\frac{n}{2}} \|f(\cdot +h)- f(\cdot)\|_{L^2(\R^n_+)} \aleq  |h|^{s-\frac{n}{2}} [f]_{W^{s,2}_+(\R^n_+)},
\]
which implies the full claim.

So assume for now that $h \in \R^{n-1} \times \{0\}$ and $|h| = 1$ and we want to show
\[
 \|\delta_h f\|_{L^2(\R^n_+)} \aleq  [f]_{W^{s,2}_+(\R^n_+)}.
\]

Now observe that if $x \in B(x_0,1)^+$ then $x+h \subset B(x_0,2)^+$ (here we use that $h$ is tangential). We then have
\[
\begin{split}
 \|\delta_h f\|_{L^2(B(x_0,1)^+)} \leq& \|f(\cdot+h)-(f)_{B(x_0,2)^+}\|_{L^2(B(x_0,1)^+)}+\|f(\cdot)-(f)_{B(x_0,2)^+}\|_{L^2(B(x_0,1)^+)}\\
 \leq&2\|f(\cdot)-(f)_{B(x_0,2)^+}\|_{L^2(B(x_0,{2})^+)}\\
 \aleq&2 \brac{\int_{B(x_0,2)^+}\int_{B(x_0,2)^+} \frac{|f(x)-f(y)|^2}{|x-y|^{n+2s}}\, dx\, dy}^{\frac{1}{2}}\\
 \end{split}
\]
Squaring both sides we have
\[
\begin{split}
 \|\delta_h f\|_{L^2(B(x_0,1)^+)}^2 \aleq&\int_{B(x_0,2)^+}\int_{B(x_0,2)^+} \frac{|f(x)-f(y)|^2}{|x-y|^{n+2s}}\, dx\, dy\\
 \end{split}
\]
This holds for any $x_0$. We can cover $\R^n$ by countably many $\{B(x_k,1)\}_{k \in \N}$ so that each point $z \in \R^n$ is covered by a finite number $N \approx 2^n$ many balls of $\{B(x_k,2)\}_{k \in \N}$. Then we sum up the above inequality and have
\[
\begin{split}
\|\delta_h f\|_{L^2(\R^n_+)}^2 & \aleq \sum_{k \in \N} \|\delta_h f\|_{L^2(B(x_k,1)^+)}^2\\
\aleq&\sum_{k \in \N} \int_{B(x_k,2)^+}\int_{B(x_k,2)^+} \frac{|f(x)-f(y)|^2}{|x-y|^{n+2s}}\, dx\, dy\\
\leq&\sum_{k \in \N} \int_{B(x_k,2)^+}\int_{\R^n_+} \frac{|f(x)-f(y)|^2}{|x-y|^{n+2s}}\, dx\, dy\\
\aleq&N\int_{\R^n_+}\int_{\R^n_+} \frac{|f(x)-f(y)|^2}{|x-y|^{n+2s}}\, dx\, dy\\
 \end{split}
\]
This proves the claim.
\end{proof}

\begin{lemma}\label{la:del2todel1}
Let $0 < t_1 < 1$ and $t_2 \geq t_1$. Then
\[
 \sup_{h \in \R^{n-1} \times \{0\}} |h|^{-t_1} \|\delta_{h} f\|_{L^p(\R^n_+)}  \aleq \sup_{h \in \R^{n-1} \times \{0\}} |h|^{-t_2}\|\delta_{2,h} f\|_{L^p(\R^n_+)} + \|f\|_{L^p(\R^n_+)}
\]
\end{lemma}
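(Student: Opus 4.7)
The plan is to exploit the algebraic identity
\[
 2\delta_h f(x) = \delta_{2h} f(x) - \delta_{2,h} f(x+h),
\]
which one verifies by writing out both sides: the right-hand side equals $f(x+2h)-f(x) - [f(x+2h)+f(x)-2f(x+h)] = 2f(x+h)-2f(x)$. Since $h \in \R^{n-1}\times\{0\}$ is tangential, translation by $h$ preserves the halfspace $\R^n_+$, so taking $L^p(\R^n_+)$ norms and using the shift invariance of Lebesgue measure yields
\[
 \|\delta_h f\|_{L^p(\R^n_+)} \leq \tfrac{1}{2}\|\delta_{2h} f\|_{L^p(\R^n_+)} + \tfrac{1}{2}\|\delta_{2,h} f\|_{L^p(\R^n_+)}.
\]

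The next step is to iterate this one-step inequality $N$ times, producing the telescoping bound
\[
 \|\delta_h f\|_{L^p(\R^n_+)} \leq 2^{-N}\|\delta_{2^N h} f\|_{L^p(\R^n_+)} + \sum_{k=0}^{N-1} 2^{-(k+1)} \|\delta_{2,2^k h} f\|_{L^p(\R^n_+)}.
\]
Denoting $A := \sup_{g\in\R^{n-1}\times\{0\}}|g|^{-t_2}\|\delta_{2,g}f\|_{L^p(\R^n_+)}$, the trivial estimate $\|\delta_{g}f\|_{L^p}\leq 2\|f\|_{L^p}$ and the defining bound $\|\delta_{2,2^k h}f\|_{L^p}\leq A(2^k|h|)^{t_2}$ give
\[
 \|\delta_h f\|_{L^p(\R^n_+)} \leq 2^{1-N}\|f\|_{L^p(\R^n_+)} + \tfrac{A}{2}|h|^{t_2}\sum_{k=0}^{N-1} 2^{k(t_2-1)}.
\]

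Finally, I would choose $N$ so that $2^N|h|\sim 1$, i.e.\ $N\approx -\log_2|h|$ when $|h|<1$, and estimate the geometric sum: if $t_2<1$ it converges and contributes $\lesssim A|h|^{t_2}$; if $t_2>1$ it is of order $2^{N(t_2-1)}\sim|h|^{1-t_2}$ and the product with $|h|^{t_2}$ gives $\lesssim A|h|$; the borderline $t_2=1$ contributes $A|h|\log(1/|h|)$. In all three cases, after dividing by $|h|^{t_1}$ and using $t_1<1$ and $t_2\geq t_1$, the result stays bounded uniformly in $|h|<1$ by $\|f\|_{L^p}+A$. For $|h|\geq 1$, the bound $|h|^{-t_1}\|\delta_h f\|_{L^p}\leq 2\|f\|_{L^p}$ is immediate. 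Taking the supremum over $h$ completes the argument.

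The main obstacle is essentially bookkeeping: keeping track of the three regimes $t_2<1$, $t_2=1$, $t_2>1$ in the geometric series, and checking that the logarithmic factor in the borderline case is absorbed by the power $|h|^{1-t_1}$ available from $t_1<1$. The genuine ingredient is the algebraic identity combined with the tangentiality of $h$, which is what allows the shifted norms to still live over the same halfspace.
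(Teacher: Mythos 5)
Your proof is correct and starts from the same algebraic identity as the paper: the relation $2\delta_h f(x)=\delta_{2h}f(x)-\delta_{2,h}f(x+h)$ that you use is just the paper's identity $-\delta_{-h}f(x)=\tfrac12\bigl(\delta_{2h}f(x-h)-\delta_{2,h}f(x)\bigr)$ shifted by $h$, and both lead to $\|\delta_h f\|_{L^p(\R^n_+)}\le\tfrac12\|\delta_{2h}f\|_{L^p(\R^n_+)}+\tfrac12\|\delta_{2,h}f\|_{L^p(\R^n_+)}$. Where you diverge is in how the iteration is concluded. The paper first reduces to the case $t_2=t_1=:t\in(0,1)$, sets $F(\lambda):=\sup_{|h|=\lambda}|h|^{-t}\|\delta_h f\|_{L^p(\R^n_+)}$, and iterates $F(\lambda)\le 2^{t-1}F(2\lambda)+\tfrac{\Lambda}{2}$ with $k\to\infty$; since $2^{t-1}<1$ and $F(2^k\lambda)\to 0$, a clean geometric-series bound results, with no case distinction and, in fact, no $\|f\|_{L^p}$ term at all when $t_1=t_2$ (that term is only needed for the routine reduction from $t_2>t_1$). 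You instead keep $t_2$ general, truncate the telescoping sum at the $h$-dependent depth $N\sim\log_2(1/|h|)$, and then split into the regimes $t_2<1$, $t_2=1$, $t_2>1$ when estimating $\sum_{k<N}2^{k(t_2-1)}$. This is also valid and makes explicit how the hypothesis $t_1<1$ absorbs both the trivial term $2^{1-N}\|f\|\lesssim|h|\|f\|$ and the borderline logarithm at $t_2=1$; the paper's normalization to $t_1=t_2<1$ buys a case-free argument at the cost of a preliminary reduction step, while your version trades that for the three-case bookkeeping you flagged.
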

\begin{proof}
The estimate for $t_2 > t_1$ follows readily from the case $t_2=t_1$. So in the following we assume $t_2 = t_1 = t \in (0,1)$.
Let $f \in L^p(\R^n_+)$ such that 
\[
\Lambda := \sup_{h \in \R^{n-1} \times \{0\}} |h|^{-t}\|\delta_{2,h} f\|_{L^p(\R^n_+)} < \infty.
\]
We use a trick from \cite[Lemma 2.3]{BL17}:
\[
 f(x)-f(x-h) = \frac{1}{2} \Big (\brac{f(x+h)-f(x-h)}-\brac{f(x+h)+f(x-h)-2f(x)} \Big ).
\]
That is
\[
 - \delta_{-h} f(x) =\frac{1}{2} \brac{\delta_{2h}f(x-h)-\delta_{2,h}f(x)}.
\]
Taking the $L^p$-norm we find
\[
 \|\delta_{h} f\|_{L^p(\R^n_+)} = \|\delta_{-h} f\|_{L^p(\R^n_+)} \leq\frac{1}{2} \|\delta_{2h}f\|_{L^p(\R^n_+)} + \frac{1}{2} \|\delta_{2,h}f\|_{L^p(\R^n_+)},
\]
and thus
\[
 |h|^{-t}\|\delta_{h} f\|_{L^p(\R^n_+)} \leq\frac{1}{2} 2^{t}|2h|^{-t}\|\delta_{2h}f\|_{L^p(\R^n_+)} + \frac{1}{2} |h|^{-t}\|\delta_{2,h}f\|_{L^p(\R^n_+)},
\]
For $\lambda > 0$ we set 
\[
 F(\lambda) := \sup_{h \in \R^{n-1} \times \{0\}:\, |h| = \lambda} |h|^{-t}\|\delta_{h} f\|_{L^p(\R^n_+)}.
\]
Clearly,
\[
 F(\lambda) \leq \lambda^{-t} 2 \|f\|_{L^p(\R^n_+)} < \infty \quad \text{for all }\lambda >0.
\]
In particular we find $F(+\infty) = 0$. The above inequality then implies for $\theta := 2^{t-1} \in (0,1)$
\[
 F(\lambda) \leq \theta F(2\lambda) + \frac{\Lambda}{2} \quad \forall \lambda > 0.
\]
Iterating this inequality we have for any $k \in \N$ and any $\lambda > 0$
\[
 F(\lambda) \leq \theta^k F(2^k \lambda) + \sum_{\ell = 0}^{k-1} \theta^\ell \frac{\Lambda}{2}.
\]
Thus, as $k \to \infty$ we have 
\[
 F(\lambda) \leq \frac{1}{1-\theta} \frac{\Lambda}{2}.
\]
This holds for any $\lambda > 0$, so we have 
\[
 \sup_{\lambda > 0} F(\lambda) \leq  \frac{1}{1-\theta} \frac{\Lambda}{2}.
\]
We conclude that 
\[
\sup_{h \in \R^{n-1} \times \{0\} \setminus \{0\}} |h|^{-t}\|\delta_{h} f\|_{L^p(\R^n_+)} \leq \frac{1}{2-2^{t}} \sup_{h \in \R^{n-1} \times \{0\}} |h|^{-t}\|\delta_{2,h} f\|_{L^p(\R^n_+)}. \qedhere
\]
\end{proof}

\begin{lemma}\label{la:tangentialest}
Let $t < t_2$, then
\[
 \brac{\int_{\R^n_+} \int_{\R^n_+} \frac{|u(x',{x_n})-u(y',{x_n})|^p}{|x-y|^{n+tp}} dx' dx_n dy' dy_n}^{\frac{1}{p}} \aleq \sup_{h \in \R^{n-1} \times \{0\}}|h|^{-t_2}\|\delta_{h}u\|_{L^p(\R^n_+)} + \|u\|_{L^p(\R^n_+)}.
\]
\end{lemma}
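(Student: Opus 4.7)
The plan is to first convert the left-hand side into an integral over tangential translations $h \in \R^{n-1}\times\{0\}$, and then split the resulting integration according to small versus large $|h|$.

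\textbf{Reduction to translation form.} Since the numerator $|u(x',x_n)-u(y',x_n)|^p$ is independent of $y_n$, Fubini reduces the $y_n$-integral to a one-dimensional computation: a change of variables $\xi = y_n-x_n$ yields, uniformly in $x_n > 0$,
\[
 \int_0^\infty \frac{dy_n}{\bigl(|x'-y'|^2+(x_n-y_n)^2\bigr)^{(n+tp)/2}} \aeq |x'-y'|^{\,1-n-tp},
\]
which is precisely the equivalence already recorded in the statement of \Cref{th:tangential}. Setting $h = y'-x'$, applying Fubini in $x_n$, and identifying $h\in\R^{n-1}$ with $(h,0)\in\R^{n-1}\times\{0\}$, one obtains
\[
 \text{(LHS)}^p \aeq \int_{\R^{n-1}} \frac{\|\delta_{(h,0)}u\|^p_{L^p(\R^n_+)}}{|h|^{n-1+tp}}\, dh.
\]

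\textbf{Splitting the integral.} Denote $M := \sup_{h\in\R^{n-1}\times\{0\}} |h|^{-t_2}\|\delta_h u\|_{L^p(\R^n_+)}$ and break the integration into the regions $|h|\le 1$ and $|h|>1$. For small $|h|$ use $\|\delta_{(h,0)}u\|_{L^p(\R^n_+)}\le M|h|^{t_2}$ to get, in polar coordinates on $\R^{n-1}$,
\[
 \int_{|h|\le 1} \frac{M^p|h|^{t_2 p}}{|h|^{n-1+tp}}\, dh \aleq M^p\int_0^1 r^{(t_2-t)p-1}\, dr \aleq M^p,
\]
where convergence follows from $t_2>t$. For large $|h|$ use the trivial bound $\|\delta_h u\|_{L^p(\R^n_+)}\le 2\|u\|_{L^p(\R^n_+)}$, so that
\[
 \int_{|h|>1} \frac{2^p\|u\|^p_{L^p(\R^n_+)}}{|h|^{n-1+tp}}\, dh \aleq \|u\|^p_{L^p(\R^n_+)},
\]
with convergence ensured by $tp>0$. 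Taking $p$-th roots and summing the two contributions gives the desired estimate.

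\textbf{Main obstacle.} There is no substantive obstacle here: the lemma is essentially a standard equivalence between an integral fractional seminorm and its difference-quotient formulation, sharpened to a bound by the supremum of translation norms. The only technical point is verifying that the one-dimensional $y_n$-integral in the first step produces the expected tangential kernel uniformly in $x_n>0$; this is a direct one-variable computation with no boundary pathology since the kernel is positive and integrable away from the diagonal.
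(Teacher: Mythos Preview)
Your proof is correct and follows essentially the same route as the paper: integrate out $y_n$ to reduce to the tangential kernel $|x'-y'|^{-(n-1)-tp}$, substitute $h=y'-x'$ to rewrite the left-hand side as $\int_{\R^{n-1}} |h|^{-(n-1+tp)}\|\delta_{(h,0)}u\|_{L^p(\R^n_+)}^p\,dh$, and then split at $|h|=1$ using the supremum bound for small $|h|$ and the trivial $L^p$ bound for large $|h|$. The only cosmetic difference is that you record the $y_n$-integral as an equivalence $\aeq$ (which is indeed true uniformly in $x_n>0$), whereas the paper states only the upper bound $\leq$ that is actually needed.
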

\begin{proof}
We have
\[
\begin{split}
 &\int_{y_n > 0} \frac{1}{|x-y|^{n+tp}} dy_n\\
 =&\int_{y_n > 0} \frac{1}{\brac{|x'-y'|^2 + |x_n-y_n|^2}^\frac{n+tp}{2}} dy_n\\
 \leq&\int_{z \in \R} \frac{1}{\brac{|x'-y'|^2 + |z|^2}^\frac{n+tp}{2}} dz\\
 =&|x'-y'|^{-n-tp} \int_{z \in \R} \frac{1}{\brac{1+ \abs{\frac{z}{|x'-y'|}}^2}^\frac{n+tp}{2}} dz\\
 =&|x'-y'|^{-n-tp} |x'-y'| \underbrace{\int_{\tilde{z} \in \R} \frac{1}{\brac{1+ \abs{\tilde{z}}^2}^\frac{n+tp}{2}} d\tilde{z}}_{=C(n,t,p) < \infty}\\
 =&C(n,t,p) |x'-y'|^{-(n-1)-tp}.
 \end{split}
\]
So
\[
\begin{split}
 &\int_{\R^n_+} \int_{\R^n_+} \frac{|u(x',x_n)-u(y',x_n)|^p}{|x-y|^{n+tp}} dx' dx_n dy' dy_n\\
 \leq&C\int_{0}^\infty \int_{\R^{n-1}} \int_{\R^{n-1}} \frac{|u(x',x_n)-u(y',x_n)|^p}{|x'-y'|^{n-1+tp}} dx'\,dy'\, dx_n  \\
  =&C\int_{h' \in \R^{n-1}}\int_{0}^\infty \int_{\R^{n-1}}  \frac{|u(x',x_n)-u(x'+h',x_n)|^p}{|h'|^{n-1+tp}} dx'\, dx_n\, dh' \\
 \end{split}
\]
So if we set $h = (h',0)$, then we have, setting $\Lambda := \sup_{h \in \R^{n-1} \times \{0\}}|h|^{-t_2}\|\delta_{h}u\|_{L^p(\R^n_+)}$,
\[
\begin{split}
 &\int_{\R^n_+} \int_{\R^n_+} \frac{|u(x',x_n)-u(y',x_n)|^p}{|x-y|^{n+tp}} dx' dx_n dy' dy_n\\
 \leq&C\int_{h' \in \R^{n-1}} |h|^{-(n-1+tp)}\int_{0}^\infty \int_{\R^{n-1}}  |u(x)-u(x+h)|^p dx'\, dx_n\, dh' \\
 =&C\int_{h' \in \R^{n-1}} |h|^{-(n-1+tp)} \|\delta_{h}u\|_{L^p(\R^n_+)}^p  dh' \\
 \aleq&\int_{h' \in \R^{n-1}, |h'| \leq 1} |h|^{-(n-1+tp)} \|\delta_{h}u\|_{L^p(\R^n_+)}^p  dh' +\int_{h' \in \R^{n-1}, |h'| \geq 1} |h|^{-(n-1+tp)} \|\delta_{h}u\|_{L^p(\R^n_+)}^p  dh'\\
 \aleq&\int_{h' \in \R^{n-1}, |h'| \leq 1} |h|^{-(n-1+(t-t_2)p)} dh' \Lambda^p +\|u\|_{L^p(\R^n_+)}^p  \underbrace{\int_{h' \in \R^{n-1}, |h'| \geq 1} |h|^{-(n-1+tp)} dh'}_{= C(t,p) < \infty}
 \end{split}
\]
We observe that since $t_2 > t$ we have $|h|^{-(n-1+(t-t_2)p)} = |h|^{(t_2-t)p-(n-1)} \in L^1(B(0,1) \cap \R^{n-1})$, so we have shown the claim.
\end{proof}

\section{Existence and local \texorpdfstring{$L^2$}{L2}-estimates}\label{s:L2est}
\begin{lemma}\label{la:localsol}
Let $s \in (\frac{1}{2},1)$, $n \geq 2$, $t \in [0,s)$ and $p \in [2,\infty)$. 
Then for any $G \in L^p(\R^n)$, $\lambda > 0$ and $x_0 \in \R^n_+$, there exists $w \in W^{s,2}(\R^n)$, $w \equiv 0$ in $\R^n_-$ and $\supp w \subset B(x_0,10\lambda)$ such that
\[
 \Dels{s}_{\R^n_+} w = \Ds{t} G \quad \text{in $B(x_0,\lambda)^+$},
\]
and we have the estimate
\begin{equation}\label{eq:la:localsolest}
 \lambda^{-\frac{n}{2}}\|w\|_{L^2(\R^n_+)} + \lambda^{s-\frac{n}{2}}[w]_{W^{s,2}(\R^n_+)} \aleq  \lambda^{2s-t} \brac{\mathcal{M}|G|^2(x_0)}^{\frac{1}{2}}.
\end{equation}

\end{lemma}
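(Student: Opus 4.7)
The plan is to construct $w$ by Lax--Milgram (equivalently, Riesz representation) in the Hilbert space
\[
V := W^{s,2}_0(B(x_0,10\lambda)^+)
\]
equipped with the bilinear form
\[
a(u,v) := \int_{\R^n_+}\int_{\R^n_+}\frac{(u(x)-u(y))(v(x)-v(y))}{|x-y|^{n+2s}}\,dx\,dy.
\]
Coercivity of $a$ on $V$ follows by combining \Cref{la:Ws0halfspace} with the standard fractional Poincar\'e inequality for functions supported in $B(x_0,10\lambda)$, which in particular yields
\begin{equation}\label{eq:proof:poinc}
\|v\|_{L^2(\R^n_+)}\aleq\lambda^s[v]_{W^{s,2}(\R^n_+)}\qquad\forall v\in V.
\end{equation}
The entire lemma then reduces to controlling the linear functional $L(\varphi):=\int_{\R^n}G\,\Ds{t}\varphi\,dx$ by
\begin{equation}\label{eq:proof:Lbound}
|L(\varphi)|\aleq\lambda^{\frac{n}{2}+s-t}\bigl(\mathcal{M}|G|^2(x_0)\bigr)^{\frac{1}{2}}[\varphi]_{W^{s,2}(\R^n_+)}\qquad\forall\varphi\in V.
\end{equation}

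To prove \eqref{eq:proof:Lbound} I would split $G = G_0 + \sum_{k\geq 1}G_k$ with $G_0 := G\chi_{B(x_0,20\lambda)}$ and $G_k := G\chi_{B(x_0,20\cdot 2^k\lambda)\setminus B(x_0,20\cdot 2^{k-1}\lambda)}$. For the near-field piece $G_0$ I would use Cauchy--Schwarz together with the Plancherel interpolation
\[
\|\Ds{t}\varphi\|_{L^2(\R^n)}\aleq[\varphi]_{\dot W^{s,2}(\R^n)}^{t/s}\|\varphi\|_{L^2(\R^n)}^{1-t/s}\aleq\lambda^{s-t}[\varphi]_{W^{s,2}(\R^n_+)},
\]
where the last step uses \eqref{eq:proof:poinc} and \Cref{la:Ws0halfspace}; pairing this with $\|G_0\|_{L^2(\R^n)}\aleq\lambda^{n/2}(\mathcal{M}|G|^2(x_0))^{1/2}$ gives the desired order. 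For each $k\geq 1$ the essential point is that $\supp\varphi\subset B(x_0,10\lambda)$ is separated from $\supp G_k$ by a distance $\ageq 2^k\lambda$, so that the integral formula for $\Ds{t}\varphi$ collapses (as $\varphi$ vanishes on the far field) to the pointwise bound
\[
|\Ds{t}\varphi(y)|\aleq\int_{\supp\varphi}\frac{|\varphi(z)|}{|y-z|^{n+t}}\,dz\aleq(2^k\lambda)^{-n-t}\lambda^{n/2}\|\varphi\|_{L^2(\R^n)}\qquad y\in\supp G_k.
\]
Combined with $\|G_k\|_{L^1(\R^n)}\aleq(2^k\lambda)^n(\mathcal{M}|G|^2(x_0))^{1/2}$, each dyadic shell contributes of order $2^{-kt}\lambda^{n/2-t}(\mathcal{M}|G|^2(x_0))^{1/2}\|\varphi\|_{L^2}$, which sums over $k\geq 1$ for $t>0$ (the case $t=0$ is automatic since $\Ds{0}\varphi\equiv\varphi\equiv 0$ on the far field). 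A final application of \eqref{eq:proof:poinc} converts $\|\varphi\|_{L^2}$ into $\lambda^s[\varphi]_{W^{s,2}(\R^n_+)}$ and yields \eqref{eq:proof:Lbound}.

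With \eqref{eq:proof:Lbound} in hand, Lax--Milgram produces a unique $w\in V$ satisfying $a(w,\varphi)=L(\varphi)$ for all $\varphi\in V$; the support constraint $\supp w\subset B(x_0,10\lambda)$ and $w\equiv 0$ on $\R^n_-$ are built into $V$, and since $C_c^\infty(B(x_0,\lambda)^+)\subset V$ the equation $\Dels{s}_{\R^n_+}w=\Ds{t}G$ holds distributionally in $B(x_0,\lambda)^+$. Testing $a(w,\cdot)=L(\cdot)$ with $\varphi=w$ yields
\[
[w]_{W^{s,2}(\R^n_+)}^2=L(w)\aleq\lambda^{\frac{n}{2}+s-t}\bigl(\mathcal{M}|G|^2(x_0)\bigr)^{\frac{1}{2}}[w]_{W^{s,2}(\R^n_+)},
\]
which is the seminorm half of \eqref{eq:la:localsolest}; the $L^2$-part is a further use of \eqref{eq:proof:poinc}. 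I expect the main obstacle to be \eqref{eq:proof:Lbound}: since $\Ds{t}\varphi$ has only polynomial decay at infinity, a naive pairing of $L$ with a merely $L^p$-function $G$ fails, and it is the dyadic decomposition together with the pointwise collapse of $\Ds{t}\varphi$ on the far field (where $\varphi$ vanishes) that recovers summability and pins down the correct maximal-function bound.
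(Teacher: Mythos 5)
Your proposal is correct and follows essentially the same route as the paper: a variational/Lax--Milgram construction of $w$ in the same space of functions supported in $B(x_0,10\lambda)^+$, with the estimate obtained by testing against $w$ and splitting $G$ into a near-field piece (Cauchy--Schwarz plus Gagliardo--Nirenberg interpolation for $\|\Ds{t}\cdot\|_{L^2}$) and a far-field piece where the pointwise decay of $\Ds{t}$ applied to a compactly supported function produces the maximal-function bound. The only cosmetic differences are that you prove the maximal-function bound directly on the linear functional for all $\varphi\in V$ and work at general scale $\lambda$ with a dyadic far-field decomposition, whereas the paper first establishes existence via a weaker $\|G\|_{L^p}$ coercivity bound, then obtains the maximal-function estimate by testing with $w$ itself with a single far-field piece, and finally handles general $\lambda$ by scaling from $\lambda=1$.
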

\begin{proof}
\underline{First assume $\lambda = 1$.}
Denote 
\[
 Y := \{v \in W^{s,2}(\R^n) :   \supp v \subset \overline{B(x_0,10)} \mbox{ and } v \equiv 0 \text{ in $\R^n_-$}\}
\]
and set 
\[
 \mathcal{E}(v) := \frac{1}{2}\int_{\R^n_+}\int_{\R^n_+} \frac{|v(x)-v(y)|^2}{|x-y|^{n+2s}}\, dx\, dy +\int_{\R^n} G \Ds{t}v.
\]
We observe that 
\[
 \abs{\int G \Ds{t}v} \aleq \|G\|_{L^p(\R^n)}\, \|\Ds{t} v\|_{L^{p'}(\R^n)}.
\]
Since $p' \leq 2$, $s \in (\frac{1}{2},1)$, $t <s$ and $n \geq 2$ we can use embedding theorems and the compactness of $\supp v$ to obtain
\[
\begin{split}
 \|\Ds{t} v\|_{L^{p'}(\R^n)} &\aleq \|v\|_{L^1(\R^n)} + [v]_{W^{s,2}(\R^n)} \\
 &\aleq C \|v\|_{L^{\frac{ns}{n-2s}}(\R^n)}+ [v]_{W^{s,2}(\R^n)} \\
 &\aleq C [v]_{W^{s,2}(\R^n)}.
 \end{split}
\]
Observe that constants are independent of $x_0$. By \Cref{la:Ws0halfspace}, we then have 
\[
 \|\Ds{t} v\|_{L^{p'}(\R^n)} \aleq [v]_{W^{s,2}(\R^n_+)}.
\]
By the same argument, we can also show 
\[
 \|\Ds{t} v\|_{L^{p'}(\R^n)} \aleq [v]_{W^{\bar{s},2}(\R^n_+)},
\]
for any $\bar{s} \in (\frac{1}{2},s)$, $\bar{s} > t$. In particular we find that 
\[
 v \mapsto \int_{\R^n} G \Ds{t}v
\]
is continuous with respect to the weak $W^{s,2}_0(\R^n_+)$-convergence (by Rellich's theorem, using that $\supp v \subset B(x_0,10)$).

In particular, we find 
\[
 [v]_{W^{s,2}(\R^n_+)}^2 \aleq \mathcal{E}(v) + \|G\|_{L^p(\R^n)}^2,
\]
with a constant independent of $x_0$. By Poincar\'e inequality, since $\supp v \subset B(x_0,10) \cap \R^n_+$,
\[
 \|v\|_{L^{2}(\R^n_+)} \aleq [v]_{W^{s,2}(\R^n_+)},
\]
again with a constant independent of $x_0$, so that we find 
\[
 \|v\|_{W^{s,2}(\R^n_+)}^2 \aleq \mathcal{E}(v) + \|G\|_{L^p(\R^n)}^2.
\]
Thus $\mathcal{E}$ is coercive in $Y$, and by the above considerations it is lower semicontinuous w.r.t weak $W^{s,2}_0$-convergence. We then find a minimizer $w \in Y$ of $\mathcal{E}$ in $Y$ by the direct method of Calculus of Variations.

Since for any $\varphi \in C_c^\infty(B(x_0,10)^+)$ we have $w + t\varphi \in Y$, we find that $w$ must satisfy the Euler-Lagrange equation
\[
 \Dels{s}_{\R^n_+} w = \Ds{t} G \quad \text{in $B(x_0,10)^+$}.
\]
By density we may testing this equation with $w$ itself, and we have 
\[
\begin{split}
[w]_{W^{s,2}(\R^n_+)}^2 &= \int_{\R^n} G\, \Ds{t} w\\
 &\leq \|G\|_{L^2(B(x_0,100))}\, \|\Ds{t} w\|_{L^{2}(\R^n)} + \int_{\R^n \setminus B(x_0,100)} |G|\, |\Ds{t} w|.
\end{split}
\]
If $t=0$, the last term is zero. If $t > 0$, then for $x \in \R^n \setminus B(x_0,100)$, by the support of $w$
\[
\begin{split}
|\Ds{t} w(x)| &\leq \int_{B(x_0,10)} |w(y)|\, |x-y|^{-n-t}\, dy\\
 &\aleq \brac{1+|x_0-x|}^{-n-t}\, \|w\|_{L^2(\R^n)}\\
  &\aleq \brac{1+|x_0 - x|}^{-n-t} [w]_{W^{s,2}(\R^n_+)}.
\end{split}
\]
So we have 
\[
\begin{split}
 [w]_{W^{s,2}(\R^n_+)}^2 
%  &\aleq \int_{\R^n} G\, \Ds{t} w \\
 \aleq& \brac{\|G\|_{L^2(B(x_0,100))}+  \|(1+|x_0 - \cdot|)^{-n-t}\, G\|_{L^1(\R^n)}} [w]_{W^{s,2}(\R^n_+)}\\
 \aleq& \brac{\mathcal{M} |G|^2}^{\frac{1}{2}}(x_0)\, [w]_{W^{s,2}(\R^n_+)}.
 \end{split}
\]
This implies \eqref{eq:la:localsolest} for $\lambda = 1$. \medskip

\underline{Now let $\lambda > 0$}

Consider $\bar{G}(x) := \lambda^{2s-t}G(\lambda x)$.

By the previous argument we find $\bar{w} \in W^{s,2}(\R^n_+)$, $\supp \bar{w} \subset B(\frac{x_0}{\lambda},10)$
\[
 \Dels{s}_{\R^n_+} \bar{w} = \Ds{t} \bar{G} \quad \text{in $B(\frac{x_0}{\lambda},1)^+$},
\]
and we have the estimate
\[
 \|\bar{w}\|_{L^2(\R^n_+)} + [w]_{W^{s,2}(\R^n_+)} \aleq \brac{\mathcal{M}|\bar{G}|^2(\lambda x_0)}^{\frac{1}{2}} = \lambda^{2s-t}\brac{\mathcal{M}|G|^2(\lambda x_0)}^{\frac{1}{2}}
\]
Set 
\[
 w(x) := \bar{w}(x/\lambda),
\]
then we have found $w \in W^{s,2}(\R^n_+)$, $\supp w \subset B(x_0,10\lambda)$ solving
\[
 \Dels{s}_{\R^n_+} w = \lambda^{-2s} \Ds{t} \bar{G} = G \quad \text{in $B(x_0,\lambda)^+$},
\]
and we have 
\[
  \lambda^{-\frac{n}{2}}\|w\|_{L^2(\R^n_+)}  =  \|\bar{w}\|_{L^2(\R^n)}
\]
and 
\[
  \lambda^{s-\frac{n}{2}} [w]_{W^{s,2}(\R^n_+)}  =   [\bar{w}]_{W^{s,2}(\R^n_+)}.\qedhere
\]
\end{proof}

Recall that for $h \in \R^n$, we denote the discrete differentiation operators
\[
 \delta_h f(x) := f(x+h) - f(x)
\]
and 
\[
 \delta_{2,h} f(x) := f(x+h)+f(x-h) - 2f(x).
\]

\begin{lemma}\label{la:globalsol3}
Let $n \geq 2$, $s \in (\frac{1}{2},1)$ and $G \in L^p(\R^n)$, $p \geq 2$, $t \in [0,s)$. 

Let $\lambda > 0$, $x_0 \in \R^n_+$ and take $\tilde{\eta} \in C_c^\infty(B(0,2))$, and set $\tilde{\eta}_{x_0,\lambda} := \tilde{\eta}((\cdot-x_0)/\lambda)$.

Then there exists $w \in W^{s,2}_0(\R^n_+)$, $\supp w \subset B(x_0,5\lambda)$, such that for any $h \in \R^{n-1} \times \{0\}$, $|h| < \lambda$,
\[
 \Dels{s}_{\R^n_+} \delta_{2,h} w = \delta_{2,h} \brac{\tilde{\eta}_{x_0,\lambda} \Ds{t} G} +g\quad \text{in $B(x_0,3\lambda) \cap \R^n_+$}.
\]
Moreover we have
\begin{equation}\label{eq:sol3:goalest1}
 \|g\|_{L^\infty(B(x_0,2\lambda))} \aleq_{\tilde{\eta}} |h|^{2s-t}\, \brac{\mathcal{M} |G|^2(x_0)}^{\frac{1}{2}}
\end{equation}
and 
\begin{equation}\label{eq:sol3:goalest2}
 \|\delta_{2,h} w\|_{L^2(\R^n)} \aleq \lambda^{\frac{n}{2}}\, |h|^{2s-t}\, \brac{\mathcal{M} |G|^2(x_0)}^{\frac{1}{2}}.
\end{equation}
\end{lemma}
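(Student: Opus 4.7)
The plan is to construct $w$ from \Cref{la:localsol} applied at an appropriate scale and then derive the equation for $\delta_{2,h}w$ using the tangential translation invariance of $\Dels{s}_{\R^n_+}$; both estimates then follow from localizing the right-hand side via $\tilde{\eta}_{x_0,\lambda}$ and analyzing the resulting tail error.

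First, I apply \Cref{la:localsol} with scale $\lambda_0 := \lambda/2$ to produce $w \in W^{s,2}_0(\R^n_+)$ supported in $B(x_0,10\lambda_0) = B(x_0,5\lambda)$ and satisfying $\Dels{s}_{\R^n_+}w = \Ds{t}G$ on $B(x_0,5\lambda)^+$. (Although the statement of \Cref{la:localsol} only records the equation on $B(x_0,\lambda_0)^+$, the Euler--Lagrange derivation in its proof is valid on the full interior $B(x_0,10\lambda_0)^+$ of the admissible class, and this is the validity region used here.) The rescaled version of \eqref{eq:la:localsolest} then gives
\begin{equation*}
\lambda^{-n/2}\|w\|_{L^2(\R^n_+)} + \lambda^{s-n/2}[w]_{W^{s,2}(\R^n_+)} \aleq \lambda^{2s-t}\,\bigl(\mathcal{M}|G|^2(x_0)\bigr)^{1/2}.
\end{equation*}

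Second, for tangential $h$ with $|h|<\lambda$, the map $x\mapsto x+h$ preserves $\R^n_+$, so $\Dels{s}_{\R^n_+}$ commutes with $\delta_{2,h}$. For any test function $\varphi \in C_c^\infty(B(x_0,3\lambda)^+)$, the translate $\delta_{2,h}\varphi$ is supported in $B(x_0,4\lambda)^+ \subset B(x_0,5\lambda)^+$, so that
\begin{equation*}
\Dels{s}_{\R^n_+}\delta_{2,h}w\,[\varphi] = \Dels{s}_{\R^n_+}w\,[\delta_{2,h}\varphi] = \Ds{t}G\,[\delta_{2,h}\varphi] = \delta_{2,h}(\Ds{t}G)\,[\varphi].
\end{equation*}
Splitting $\Ds{t}G = \tilde{\eta}_{x_0,\lambda}\Ds{t}G + (1-\tilde{\eta}_{x_0,\lambda})\Ds{t}G$ and applying $\delta_{2,h}$ yields the claimed identity with
\begin{equation*}
g := \delta_{2,h}\bigl((1-\tilde{\eta}_{x_0,\lambda})\Ds{t}G\bigr).
\end{equation*}

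Third, the pointwise bound \eqref{eq:sol3:goalest1} is proved by writing $\Ds{t}$ via its singular-integral kernel and computing $\delta_{2,h}((1-\tilde{\eta}_{x_0,\lambda})\Ds{t}G)(x)$ for $x\in B(x_0,2\lambda)$. The cutoff $(1-\tilde{\eta}_{x_0,\lambda})$ removes the near-field contribution, so the second difference falls on a kernel that is smooth at the relevant scale; combining the $O(|h|^2)$-Taylor cancellation with the $|\cdot - y|^{-(n+t)}$-type decay of the Riesz kernel and a dyadic decomposition of $G$ around $x_0$ recovers the factor $(\mathcal{M}|G|^2(x_0))^{1/2}$ and the exponent $|h|^{2s-t}$ (via an interpolation that trades the kernel's second-order smoothness against the regularity cap $s$ and the order $t$ of $\Ds{t}$). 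Finally, \eqref{eq:sol3:goalest2} is derived by a higher-order difference-quotient energy argument: revisiting the variational setup of \Cref{la:localsol} applied to the shifted problem satisfied by $\delta_{2,h}w$, using the equation $\Dels{s}_{\R^n_+}\delta_{2,h}w = \delta_{2,h}(\tilde{\eta}_{x_0,\lambda}\Ds{t}G) + g$ together with the $L^\infty$-bound on $g$, the support constraint $\supp\delta_{2,h}w \subset B(x_0,6\lambda)$, and the fractional Poincar\'e inequality on $B(x_0,6\lambda)^+$ produces the bound, with the $\lambda^{n/2}$ factor coming from the support size and the $|h|^{2s-t}$ factor from the equation-level regularity.

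The main technical obstacle is the pointwise bound \eqref{eq:sol3:goalest1}: $\Ds{t}G$ is a priori only a tempered distribution, so interpreting $\delta_{2,h}((1-\tilde{\eta}_{x_0,\lambda})\Ds{t}G)$ as an $L^\infty$ function on $B(x_0,2\lambda)$ requires either a smoothing/limit argument or a careful duality formulation, and extracting the specific exponent $|h|^{2s-t}$ (rather than the naive $|h|^{2-t}$ from brute Taylor expansion) depends on a delicate interpolation that uses the standing assumptions $s>\tfrac{1}{2}$ and $t<s$.
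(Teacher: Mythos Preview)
Your construction of $w$ via \Cref{la:localsol} and the derivation of the equation for $\delta_{2,h}w$ are fine; the problem is the error term. With your choice $g = \delta_{2,h}\bigl((1-\tilde{\eta}_{x_0,\lambda})\Ds{t}G\bigr)$ the bound \eqref{eq:sol3:goalest1} simply fails. The hypothesis is only $\tilde{\eta}\in C_c^\infty(B(0,2))$; there is no assumption that $\tilde{\eta}\equiv 1$ anywhere, and even if there were, $\tilde{\eta}_{x_0,\lambda}$ cannot be identically $1$ on all of $B(x_0,2\lambda)$ since that is already its support. Hence on much of $B(x_0,2\lambda)$ the factor $(1-\tilde{\eta}_{x_0,\lambda})$ does not vanish, and your $g$ is, pointwise, a linear combination of values of $\Ds{t}G$ at $x$, $x\pm h$ multiplied by smooth coefficients. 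For generic $G\in L^p$ the object $\Ds{t}G$ has no $L^\infty$ control whatsoever. Your intuition that ``the cutoff removes the near-field contribution'' confuses two different localizations: $(1-\tilde{\eta}_{x_0,\lambda})$ multiplies the \emph{output} $\Ds{t}G(x)$, not the integration variable in the Riesz kernel, so the singularity of $|x-y|^{-n-t}$ at $y=x$ is fully present wherever $(1-\tilde{\eta}_{x_0,\lambda})(x)\neq 0$. No interpolation can repair this.

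The paper avoids the issue by building $w$ differently. It first solves $\Dels{s}_{\R^n_+}\tilde{w}=\tilde{\eta}_{x_0,\lambda}\Ds{t}G$ on a large ball (so the data is already the localized one), and then sets $w=\eta\,\tilde{w}$ for a \emph{second} cutoff $\eta\in C_c^\infty(B(x_0,5\lambda))$ with $\eta\equiv 1$ on $B(x_0,4\lambda)$. The error becomes $g=\delta_{2,h}[\Dels{s}_{\R^n_+},\eta](\tilde{w})$, a commutator acting on the solution $\tilde{w}\in L^2$ rather than on the rough data $\Ds{t}G$. On $B(x_0,2\lambda)$ this commutator has the tail representation $c\int_{\R^n_+\setminus B(x_0,4\lambda)}\tilde{w}(y)\,(\eta(y)-1)\,|x-y|^{-n-2s}\,dy$, and $\delta_{2,h}$ falls on the smooth kernel $|x-y|^{-n-2s}$ with $|x-y|$ bounded below by a multiple of $\lambda$, yielding directly an $|h|^2$ factor (hence $|h|^{2s-t}$ after rescaling) times $\|\tilde{w}\|_{L^2}$. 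The $L^2$ bound \eqref{eq:sol3:goalest2} is then obtained by writing $\delta_{2,h}$ as an iterated first difference, gaining $|h|^s$ from \Cref{la:embedding1}, and testing the equation for $\delta_h w$ against itself to gain the remaining $|h|^{s-t}$.
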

\begin{proof}

We first assume that \underline{$\lambda =1$ and $x_0 \in \R^n_+$}.

Take $\tilde{w} \in W^{s,2}_0(\R^n_+)$, $\supp \tilde{w} \subset B(x_0,10)^+$ the solution of 
\[
  \Dels{s}_{\R^n_+} \tilde{w} = \tilde{\eta}_{x_0,1} \Ds{t} G \quad \text{in } B(x_0,10)^+.
\]
This exists as minimizer of the energy 
\[
 \mathcal{E}(v) := \frac{1}{2}[v]_{W^{s,2}(\R^n_+)}^2 + \int G \Ds{t} \brac{\tilde{\eta}_{x_0,1} \chi_{\R^n_+} v}.
\]
in the class 
\[
 V := \left \{v \in W^{s,2}_0(\R^n_+), \supp v \subset \overline{B(x_0,10)^+} \right \}.
\]
Observe that for $v \in V$ we have by Poincar\'e inequality (since we have compact support and $p' \leq 2$ -- observe the constant is independent of $x_0$), and using \eqref{eq:vws0est},
\[
 \|\Ds{t} \brac{\tilde{\eta}_{x_0,1} \chi_{\R^n_+} v}\|_{L^{p'}(\R^n_+)} \aleq_{\tilde{\eta}_{x_0,1}} \|v\|_{L^1(\R^n_+)}+ [\chi_{\R^n_+}v]_{W^{s,2}(\R^n)} \aleq [v]_{W^{s,2}(\R^n_+)}.
\]
Thus $\mathcal{E}$ is coercive and a minimizer $\tilde{w}$ exists, which satisfies (if $t= 0$ the second term is constantly zero)
\[
\begin{split}
[\tilde{w}]_{W^{s,2}(\R^n_+)}^2 \aleq& \abs{\int_{\R^n} G \Ds{t} \brac{\tilde{\eta}_{x_0,1} \chi_{\R^n_+} \tilde{w}}}\\
\aleq &\abs{\int_{B(x_0,10)} G \Ds{t} \brac{\tilde{\eta}_{x_0,1} \chi_{\R^n_+} \tilde{w}}} + \abs{\int_{\R^n \sm B(x_0,10)} G \Ds{t} \brac{\tilde{\eta}_{x_0,1} \chi_{\R^n_+} \tilde{w}}}\\
\aleq & \|G\|_{L^2(B(x_0,10))} [v]_{W^{s,2}(\R^n_+)} + \|\tilde{\eta}_{x_0,1}\|_{L^\infty}\int_{\R^n \sm B(x_0,10)} |G(x)| \int_{B(x_0,2)_+} |\tilde{w}(y)| |x-y|^{-n-t} dy\, dx\\
\aleq & \|G\|_{L^2(B(x_0,10))} [v]_{W^{s,2}(\R^n_+)} + \|\tilde{\eta}_{x_0,1}\|_{L^\infty}\int_{\R^n \sm B(x_0,10)} |G(x)| |1+|x_0-x||^{-n-t}\,  dx\, \|\tilde{w}\|_{L^1(\R^n)}\\
\aleq &\brac{\|G\|_{L^2(B(x_0,10))} + \int_{\R^n \sm B(x_0,10)} |G(x)| |1+|x_0-x||^{-n-t}\,  dx} [\tilde{w}]_{W^{s,2}(\R^n_+)} \\
\aleq &\brac{\mathcal{M}\brac{|G|^2}(x_0)}^{\frac{1}{2}} [\tilde{w}]_{W^{s,2}(\R^n_+)}.
\end{split}
\]
That is, we have 
\begin{equation}\label{eq:sol3:2q3423r}
 \|\tilde{w}\|_{L^2(\R^n_+)} + [\tilde{w}]_{W^{s,2}(\R^n_+)} \aleq \brac{\mathcal{M}\brac{|G|^2}(x_0)}^{\frac{1}{2}}.
\end{equation}
The constant is independent of $x_0$.

Take $\eta \in C_c^\infty(B(x_0,5))$, $\eta \equiv 1$ in $B(x_0,4)$ (we can assume it is a $x_0$-translation of some generic $\eta$). We have
 \[
  \Dels{s}_{\R^n_+} (\eta \tilde{w}) = \eta \tilde{\eta}_{x_0,1} \Ds{t} G + [\Dels{s},\eta](\tilde{w}) \quad \text{in } B(x_0,10)^+.
 \]
Set $w := \eta \tilde{w}$, then $\supp w \subset B(x_0,5)$. We have
\begin{equation}\label{eq:ex2:Delsw}
  \Dels{s}_{\R^n_+} w = \eta \tilde{\eta}_{x_0,1} \Ds{t} G + [\Dels{s},\eta](\tilde{w}) \quad \text{in } B(x_0,10)^+.
 \end{equation}

From \eqref{eq:ex2:Delsw} we find that for any $|h| \leq 1$, $h \in \R^{n-1} \times \{0\}$,
 \[
  \Dels{s}_{\R^n_+} (\delta_{2,h}w) = \delta_{2,h} \brac{\eta \tilde{\eta}_{x_0,1} \Ds{t} G} + \delta_{2,h} \brac{[\Dels{s},\eta](\tilde{w})} \quad \text{in } B(x_0,9)^+.
 \]
Here, we use the commutator notation $[T,\eta](v) = T(\eta v) - \eta Tv$. 
Since $\tilde{\eta}_{x_0,1} \eta = \tilde{\eta}_{x_0,1}$ in $B(x_0,4)$ we have 
\[
  \Dels{s}_{\R^n_+} (\delta_{2,h}w) = \delta_{2,h} \brac{\tilde{\eta}_{x_0,1} \Ds{t} G} + g \quad \text{in } B(x_0,3)^+.
 \]
where we set
\[
 g = \delta_{2,h}[\Dels{s},\eta](\tilde{w}).
\]
% For $x \in \R^n_+$ we set 
% \[
%  A(x)= \{z \in \R^n: x+z \in \R^n_+\}\chgd{.}
% \]
If $x \in B(x_0,3)_+$, since $\eta \equiv 1$ in $B(x_0,4)$,
\[
\begin{split}
 &[\Dels{s},\eta](\tilde{w})(x) \\
 =& c\int_{\R^n} \frac{\tilde{w}(x+z) \eta(x+z) + \tilde{w}(x-z) \eta(x-z) - 2 \tilde{w}(x) \eta(x) - \eta(x) \brac{\tilde{w}(x+z) +\tilde{w}(x-z) -2\tilde{w}(x) }}{|z|^{n+{2s}}}\, dz\\
 =& c\int_{\R^n} \frac{\tilde{w}(x+z) (\eta(x+z) -\eta(x) )+ \tilde{w}(x-z) (\eta(x-z)  - \eta(x)) }{|z|^{n+{2s}}}\, dz\\
 =& c\int_{\R^n} \frac{\tilde{w}(x+z) (\eta(x+z) -1 )+ \tilde{w}(x-z) (\eta(x-z)  - 1) }{|z|^{n+{2s}}}\, dz\\
 =& 2c\int_{\R^n_+} \frac{\tilde{w}(y) (\eta(y) -1 )}{|x-y|^{n+{2s}}}\, dy= 2c\int_{\R^n_+ \sm B(x_0,4)} \frac{\tilde{w}(y) (\eta(y) -1 )}{|x-y|^{n+{2s}}}\, dy.\\
 \end{split}
\]
Thus, for $x \in B(x_0,2)$, $h \in \R^{n-1} \times \{0\}$ 
\[
 \abs{\delta_{2,h}[\Dels{s},\eta](\tilde{w})}(x) \aleq \int_{\R^n_+ \sm B(x_0,4)} |\tilde{w}(y)| \abs{|x+h-y|^{-n-2s} +|x-h-y|^{-n-2s}-2|x-y|^{-n-2s}}\, dy.
\]
By Taylor's theorem for $x \in B(x_0,2)$ and $y \in \R^n \sm B(x_0,4)$ and $|h| \leq 1$,
\[
\abs{|x+h-y|^{-n-2s} +|x-h-y|^{-n-2s}-2|x-y|^{-n-2s}} \aleq \sup_{|\tilde{h}| \leq 1} |x+\tilde{h}-y|^{-n-2s-2} |h|^2.
\]
Observe that for $x \in B(x_0,2)_+$, $y \in \R^n_+ \sm B(x_0,4)$, and $|\tilde{h}|\leq 1$ we have 
\[
 \sup_{|\tilde{h}| \leq 1} |x+\tilde{h}-y|^{-n-2s-2} \aleq \brac{|x-y|+1}^{-n-2s-2}.
\]
So we have for $x \in B(x_0,2)$, $h \in \R^{n-1} \times \{0\}$ 
\[
 \begin{split}
 \abs{\delta_{2,h}[\Dels{s},\eta](\tilde{w})}(x) \aleq& |h|^2 \int_{\R^n_+ \sm B(x_0,4)} |\tilde{w}(y)| \brac{|x-y|+1}^{-n-2s-2} dy\\
 \aleq & |h|^2 \|\tilde{w}\|_{L^2(\R^n)}.\\
 \end{split}
\]
This readily implies for any $h \in \R^{n-1} \times \{0\}$
\[
 \|g\|_{L^\infty(B(x_0,2))} \aleq |h|^2 \|\tilde{w}\|_{L^2(\R^n_+)} \overset{\eqref{eq:sol3:2q3423r}}{\aleq} |h|^2 \brac{\mathcal{M}\brac{|G|^2}(x_0)}^{\frac{1}{2}}.
\]
Thus \eqref{eq:sol3:goalest1} is proven for $\lambda = 1$.

As for \eqref{eq:sol3:goalest2}, since $\supp w \subset B(x_0,5)$ 
\[
\begin{split}
 \|\delta_{2,h} w\|_{L^2(\R^n)} =& \|\delta_{2,h} w\|_{L^2(B(x_0,6))}\\
  =&\|(\delta_{h} \delta_h w)(\cdot+h)\|_{L^2(B(x_0,6))}\\
  \leq&\|\delta_{h} \delta_h w\|_{L^2(B(x_0,7))}\\
  \overset{\text{L~\ref{la:embedding1}}}{\aleq}& |h|^s [\delta_h w]_{W^{s,2}(B(x_0,8))}.
 \end{split}
\]
Now we have from \eqref{eq:ex2:Delsw} for any $h \in \R^{n-1} \times \{0\}$, $|h| \leq 1$,
\[
   \Dels{s}_{\R^n_+} (\delta_{h}w) = \delta_{h} \brac{\eta \tilde{\eta}_{x_0,1} \Ds{t} G} + \delta_h [\Dels{s},\eta](\tilde{w}) \quad \text{in } B(x_0,9)^+.
  \]
Since $\supp w \subset B(x_0,5)$ we have that $\supp \delta_h w \subset B(x_0,6)$, so we can test this equation with $\delta_h w$. Then we get 
\[
\begin{split}
  [\delta_h w]_{W^{s,2}(\R^n_+)}^2 =& \int_{\R^n_+} \brac{\delta_{h} \brac{\eta \tilde{\eta}_{x_0,1} \Ds{t} G} + \delta_h [\Dels{s},\eta](\tilde{w})}\, \delta_h w\\
  =& \int_{\R^n_+} \brac{\delta_{h} \lapms{s}\brac{\eta \tilde{\eta}_{x_0,1} \Ds{t} G} + \delta_h \lapms{s} \brac{[\Dels{s},\eta](\tilde{w})}}\, \Ds{s}\delta_h w\\
 \aleq&\| \delta_h \lapms{s}\brac{\eta \tilde{\eta}_{x_0,1} \Ds{t} G}\|_{L^2(\R^n)}\, [\delta_h w]_{W^{s,2}(\R^n)} 
   + \|\delta_h \lapms{s}\brac{[\Dels{s},\eta](\tilde{w})}\|_{L^2(\R^n)} [\delta_h w]_{W^{s,2}(\R^n)}.
 \end{split}
\]
We can apply \Cref{la:Ws0halfspace} to $\delta_h w$, divide both sides by $[\delta_h w]_{W^{s,2}(\R^n)}$ and then apply \Cref{la:embedding1}, to obtain
\[
\begin{split}
  [\delta_h w]_{W^{s,2}(\R^n)}  \aleq&\|\lapms{s} \delta_h \brac{\eta \tilde{\eta}_{x_0,1} \Ds{t} G}\|_{L^2(\R^n)}
   + \|\lapms{s} \brac{\delta_h [\Dels{s},\eta](\tilde{w})}\|_{L^2(\R^n)}\\
   \aleq& |h|^{s-t} [\lapms{s} \brac{\eta \tilde{\eta}_{x_0,1} \Ds{t} G}]_{W^{s-t,2}(\R^n)} + |h|^s \| [\Dels{s},\eta](\tilde{w})\|_{L^2(\R^n)}\\
    \aleq&|h|^{s-t} \|\lapms{t} \brac{\eta \tilde{\eta}_{x_0,1} \Ds{t} G}\|_{L^2} + |h|^s \|[\Dels{s},\eta](\tilde{w})\|_{L^2(\R^n)}\\
    \aleq&|h|^{s-t} \brac{
    \|\lapms{t} \brac{\eta \tilde{\eta}_{x_0,1} \Ds{t} \brac{\chi_{B(x_0,20)} G}}\|_{L^2} + \|\lapms{t} \brac{\eta \tilde{\eta}_{x_0,1} \Ds{t} \brac{\chi_{\R^n \sm B(x_0,20)}G} }\|_{L^2}
    } + |h|^s \|[\Dels{s},\eta](\tilde{w})\|_{L^2(\R^n)}\\
    \aleq&|h|^{s-t} \brac{\brac{\mathcal{M} |G|^2}^{\frac{1}{2}}(x_0) + \|\eta \tilde{\eta}_{x_0,1} \Ds{t} \brac{\chi_{\R^n \sm B(x_0,20)}G}\|_{L^{\frac{n2}{n+t2}}(\R^n)}} + |h|^s [\tilde{w}]_{W^{s,2}(\R^n)}.
 \end{split}
\]
In the last lines we used repeatedly Sobolev embedding theorems and the arguments for the estimates from before, as well as the (non-sharp) commutator estimate 
\[
\begin{split}
 \|[\Dels{s},\eta](\tilde{w})\|_{L^2(\R^n)} \aleq& \|\eta\|_{C^2} \brac{\|\Ds{s} \tilde{w}\|_{L^2(\R^n)} + \|\tilde{w}\|_{L^2(\R^n)}}\\
  \aleq&\|\eta\|_{C^2} [\tilde{w}]_{W^{s,2}(\R^n)}.
 \end{split}
\]
which follows from the representation
\[
\begin{split}
 [\Dels{s},\eta](\tilde{w})(x) =& c \int_{\R^n} \frac{\brac{\eta(x+z)-\eta(x)} \brac{\tilde{\omega}(x+z)-\tilde{\omega}(x-z)}}{|z|^{n+2s}}\, dz\\
  &+c \int_{\R^n} \frac{\brac{\eta(x+z)+\eta(x-z)-2\eta(x)} \tilde{\omega}(x-z)}{|z|^{n+2s}}\, dz,
 \end{split}
\]
along the lines of the previous estimates of this type.

Again we observe that 
\[
\begin{split}
 &\abs{\eta \tilde{\eta}_{x_0,1} \Ds{t} \brac{\chi_{\R^n \sm B(x_0,20)}G}}(x)\\
 \aleq&\chi_{B(x_0,10)}(x) \int_{\R^n \sm B(x_0,20)} |G(y)| |x-y|^{-n-t}\, dy\\
 \aleq&\chi_{B(x_0,10)}(x) \int_{\R^n \sm B(x_0,20)} |G(y)| \brac{1+|x_0-y|}^{-n-t}\, dy\\
 \aleq&\chi_{B(x_0,10)}(x)  \mathcal{M} |G|(x_0).\\
 \end{split}
\]
Consequently,
\[
 \|\eta \tilde{\eta}_{x_0,1} \Ds{t} \brac{\chi_{\R^n \sm B(x_0,20)}G}\|_{L^{\frac{n2}{n+t2}}(\R^n)} \aleq \mathcal{M} |G|(x_0).
\]
We thus have shown (using \eqref{eq:sol3:2q3423r} and \Cref{la:Ws0halfspace}),
\[
\begin{split}
  [\delta_h w]_{W^{s,2}(\R^n)}  \aleq&|h|^{s-t}\, \brac{\mathcal{M} |G|^2}^{\frac{1}{2}}(x_0) + |h|^s [\tilde{w}]_{W^{s,2}(\R^n)}\\
  \aleq&|h|^{s-t}\, \brac{\mathcal{M} |G|^2}^{\frac{1}{2}}(x_0) + |h|^s [\tilde{w}]_{W^{s,2}(\R^n_+)}\\
  \aleq&|h|^{s-t}\, \brac{\mathcal{M} |G|^2}^{\frac{1}{2}}(x_0) + |h|^2 \brac{\mathcal{M} |G|^2}^{\frac{1}{2}}(x_0).\\
 \end{split}
\]
This implies \eqref{eq:sol3:goalest2} for $\lambda = 1$.\medskip

\underline{Now assume $\lambda > 0$ and $x_0 \in \R^n$}.

For $\bar{G}(x) := \lambda^{2s-t} G(\lambda x)$, $\bar{h} = \frac{h}{\lambda}$, equivalently
\[
 G(x) = \lambda^{t-2s} \bar{G}(x/\lambda),
\]
we find we find $\bar{g}$ and $\bar{w}$ such that for any $|\bar{h}| \leq 1$ and 
\[
 \Dels{s}_{\R^n_+} \delta_{2,\bar{h}} \bar{w} = \delta_{2,\bar{h}} \brac{\tilde{\eta}_{x_0/\lambda,1} \Ds{t} \lambda^{-2s} \bar{G}} +\bar{g}\quad \text{in $B(\frac{x_0}{\lambda},3) \cap \R^n_+$}
\]
Setting 
\[
 w(x) := \bar{w}(x/\lambda)
\]
and 
\[
 g(x) := \lambda^{-2s} \bar{g}(x/\lambda)
\]
we have
\[
\Dels{s}_{\R^n_+} \delta_{2,\lambda \bar{h}} w = \delta_{2,\lambda \bar{h}} \brac{\tilde{\eta}_{x_0,\lambda} \Ds{t} G} +g\quad \text{in $B(x_0,3\lambda) \cap \R^n_+$}.
\]
and the estimates 
\[
 \|g\|_{L^\infty(B(x_0,2\lambda))} = \lambda^{-2s} \|\bar{g}\|_{L^\infty(B(x_0/\lambda,2)} \aleq |\lambda \bar{h}|^{2s-t} \brac{\mathcal{M} |G|^2(x_0)}^{\frac{1}{2}}
\]
and 
\[
 \|\delta_{2,\lambda \bar{h}} w\|_{L^2(\R^n)} = \lambda^{\frac{n}{2}} \|\delta_{2,\bar{h}} \bar{w}\|_{L^2(\R^n)} \aleq \lambda^{\frac{n}{2}} |\lambda \bar{h}|^{2s-t} \brac{\mathcal{M} |G|^2(x_0)}
\]
Using that $\lambda \bar{h} = h$, we conclude.
\end{proof}

\section{Tangential Improvement: Estimate of maximal functions}\label{s:maxfctest}

We follow the spirit of the argument in Dong-Kim \cite{DK12} and begin with the already known H\"older estimate -- which is the main reason that we are restricted to $s > \frac{1}{2}$. As usual we denote by $(f)_{\Omega} := \mvint_{\Omega} f$ the mean value.
\begin{lemma}\label{la:sharpest2}
Assume $s > \frac{1}{2}$ and $n \geq 1$ then there exists $\sigma = \sigma(n,s)$ and a constant $C = C(n,s)$ such that for any $\kappa \geq 2$, $r > 0$, and $x_0 \in \R^n$ we have the following: 

Let $v \in W^{s,2}_0(\R^n_+)$ solve
\[
 \Dels{s}_{\R^n_+} v =h \quad \text{in $B(x_0,2\kappa r)^+$}.
\]
Then, for some $\alpha > 0$
\[
\begin{split}
   \brac{|v-(v)_{B(x_0,r)^+}|}_{B(r,x_0)^+} \leq& C(n,s) \kappa^{-\alpha} \brac{\sum_{k=0}^\infty 2^{-\sigma k} \mvint_{B(x_0,2^k \kappa r)} |v|^2 dx}^{\frac{1}{2}}\\
   &+ C(\kappa)\|h\|_{L^\infty(B(x_0,2\kappa r)\cap \R^n_+)}
   \end{split}
\]
\end{lemma}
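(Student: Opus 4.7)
The plan is to combine the known up-to-the-boundary H\"older regularity for the regional fractional Laplacian with the elementary mean-oscillation inequality $\brac{|f-(f)_B|}_B \aleq (\diam B)^\alpha\, [f]_{C^{0,\alpha}(B)}$ and a dyadic-annulus decomposition of the nonlocal far-field of $v$. The factor $\kappa^{-\alpha}$ in the conclusion arises because the H\"older seminorm is estimated on a ball of radius $\kappa r$, while the mean oscillation is measured on the ball of radius $r$, producing the scaling $r^\alpha/(\kappa r)^\alpha = \kappa^{-\alpha}$.

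Concretely, set $\rho := \kappa r$. First I would invoke a boundary H\"older estimate for the regional fractional Laplacian from \cite{Fall22,ChenHuyuan15}, available precisely because $s > \tfrac{1}{2}$: there exists $\alpha \in (0,1)$, depending only on $n,s$, such that
\[
[v]_{C^{0,\alpha}(B(x_0,\rho)^+)} \aleq \rho^{-\alpha}\, \|v\|_{L^\infty(B(x_0, 3\rho/2)^+)} + \rho^{-\alpha}\, \mathrm{Tail}(v; x_0,\rho) + \rho^{2s-\alpha}\, \|h\|_{L^\infty(B(x_0,2\rho)^+)},
\]
where $\mathrm{Tail}(v; x_0,\rho) := \rho^{2s}\int_{\R^n_+ \sm B(x_0,\rho)} |v(y)|\, |x_0-y|^{-n-2s}\, dy$. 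Applying the mean-oscillation inequality on $B(x_0, r)^+ \subset B(x_0, \rho)^+$ then gives the factor $\kappa^{-\alpha}$ in front of both the $L^\infty$-norm and the tail, and a constant $C(\kappa)$ in front of $\|h\|_{L^\infty}$, as in the statement.

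Next I would replace the $L^\infty$-norm by $L^2$-averages via a De Giorgi--Moser subsolution estimate for the regional fractional Laplacian (cf.\ \cite{ChenHuyuan15}). Since $v \equiv 0$ in $\R^n_-$, $L^2$-averages over $B(x_0,R)^+$ and $B(x_0,R)$ differ by at most a dimensional factor, so
\[
 \|v\|_{L^\infty(B(x_0, 3\rho/2)^+)} \aleq \brac{\mvint_{B(x_0, 2\rho)} |v|^2\, dx}^{1/2} + \text{tail and } \|h\|_{L^\infty}\text{-contributions}.
\]
For the nonlocal tail, I would decompose $\R^n_+ \sm B(x_0,\rho) = \bigcup_{k \geq 0} A_k$ with $A_k := (B(x_0, 2^{k+1}\rho) \sm B(x_0, 2^k\rho)) \cap \R^n_+$, estimate $|x_0-y|^{-n-2s} \aleq (2^k\rho)^{-n-2s}$ on $A_k$, and apply Cauchy--Schwarz in $y$:
\[
\mathrm{Tail}(v; x_0,\rho) \aleq \sum_{k\geq 0} 2^{-2s\, k}\, \brac{\mvint_{B(x_0, 2^{k+1}\rho)} |v|^2\, dx}^{1/2}.
\]
Reassembling these pieces yields the inequality stated in the lemma, with $\sigma = 2s$ (or any smaller positive value after absorbing sub-exponential factors).

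The principal obstacle is arranging the analytic ingredients so that the right-hand side is uniformly an $L^2$-dyadic sum. If only an $L^\infty$-flavored boundary H\"older estimate is readily available in the cited references, the $L^\infty \to L^2$ reduction via De Giorgi iteration has to be performed directly for the regional fractional Laplacian, which is standard but requires some care near $\partial \R^n_+$; in particular the coercivity/Hardy input used in \Cref{la:Ws0halfspace} should allow one to close the iteration. Everything else is a routine scaling and summation.
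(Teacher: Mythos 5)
Your proposal follows essentially the same strategy as the paper: a boundary H\"older estimate for the regional fractional Laplacian, a dyadic decomposition of the nonlocal tail, and a rescaling that produces the factor $\kappa^{-\alpha}$ from comparing oscillation on $B(x_0,r)$ to the H\"older seminorm on $B(x_0,\kappa r)$. The one point where you diverge is the extra De~Giorgi--Moser ``$L^\infty \to L^2$'' step, which the paper avoids entirely: it cites \cite[Lemma~6.1]{Fall22} in a form whose local term is already $\|v\|_{L^2(B_2(0)^+)}$ rather than an $L^\infty$-norm, and whose far-field term is the weighted tail $\int_{\R^n_+} |v(y)|\,(1+|y|)^{-n-2s}\,dy$. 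From there the dyadic-annulus Cauchy--Schwarz reduction you describe immediately yields the $\bigl(\sum_k 2^{-\sigma k} \mvint_{B(x_0,2^k)}|v|^2\bigr)^{1/2}$ form, and the full statement follows by applying the normalized estimate to $\tilde v(x):= v(\kappa r x)$. So the potential obstacle you flag at the end --- needing to run a subsolution iteration up to the boundary for the regional operator --- does not arise, because the cited estimate already supplies the local quantity in $L^2$-form; otherwise your scaling and tail bookkeeping (including the $\kappa^{-\alpha}$ and the choice $\sigma = 2s$) are correct.
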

\begin{proof}
If $x_0 = 0$, $\kappa = 2$, $r = 1$ we have from \cite[Lemma 6.1]{Fall22} 
\[
  [v]_{C^\alpha(B_1(0)^+)} \aleq \|v\|_{L^2(B_2(0)^+)} + \int_{\R^n_+} \frac{|v(y)|}{1+|y|^{d+2s}}\, dy + \|h\|_{L^\infty(B(x_0,2\kappa r)\cap \R^n_+)}
\]
which readily implies 
\[
  [v]_{C^\alpha(B_1(0)^+)} \aleq \brac{\sum_{k=0}^\infty 2^{-\sigma k} \mvint_{B(x_0,2^k)} |v|^2 dx}^{\frac{1}{2}}+ \|h\|_{L^\infty(B(x_0,2\kappa r)\cap \R^n_+)}
\]
The claim now follows from applying this estimate to 
\[
 \tilde{v}(x) := v(\kappa rx). \qedhere
\]
\end{proof}

From the H\"older estimate \Cref{la:sharpest2} and a local a priori estimates we can obtain an estimate of maximal functions, which is the main result in this section.
\begin{proposition}\label{pr:maxfctest}
Let $s \in (\frac{1}{2},1)$, $n \geq 2$. For any $\kappa \geq  2$ there exists $C(\kappa,s,n)$ such that if $u \in W^{s,2}_0(\R^n_+)$  solves
\[
 \Dels{s}_{\R^n_+} u = \Ds{t} G \quad \text{in $\R^n_+$}
\]
we have for any $h \in \R^{n-1} \times \{0\}$, any $x_0 \in \R^n_+$ and any $r > 0$
\begin{equation}\label{eq:280523:goal}
\begin{split}
 \mathcal{M}^\#_+\delta_{2,h}u(x_0) \aleq& \kappa^{-\alpha} \brac{\mathcal{M}_+|\delta_{2,h}u|^2}^{\frac{1}{2}}(x_0) \\
   & +C(\kappa) |h|^{2s-t}\, \brac{\brac{\mathcal{M} |G|^2(x_0)}^{\frac{1}{2}}+\brac{\mathcal{M} |G|^2(x_0+h)}^{\frac{1}{2}}+\brac{\mathcal{M} |G|^2(x_0-h)}^{\frac{1}{2}}}
 \end{split}
\end{equation}
The constants in $\aleq$ depend only on $n$, $s$, in particular they are independent of $f$, $h$, and $\kappa$.
\end{proposition}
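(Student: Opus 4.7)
The plan is the Dong--Kim splitting \cite{DK12} adapted to the tangential second difference $\delta_{2,h}$. For fixed $x_0 \in \R^n_+$, tangential $h$, and scale $r>0$ (inside the supremum defining $\mathcal{M}^\#_+$), I decompose $\delta_{2,h}u = \delta_{2,h}w + v$ with $v := \delta_{2,h}(u-w)$, where $w$ is the function produced by \Cref{la:globalsol3}. The point is that $v$ will satisfy a nearly homogeneous equation on a controlled ball, so the H\"older-type estimate \Cref{la:sharpest2} yields the crucial $\kappa^{-\alpha}$ factor; the piece $\delta_{2,h}w$ and the commutator error $g$ from \Cref{la:globalsol3} contribute only $|h|^{2s-t}(\mathcal{M}|G|^2)^{1/2}$-type terms.

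In the principal regime $r \gtrsim |h|/\kappa$ I take $\lambda = 4\kappa r$, so that $\lambda > |h|$ (needed by \Cref{la:globalsol3}) and the cutoff $\tilde\eta_{x_0,\lambda}$ equals $1$ on $B(x_0, 2\kappa r + |h|)$. Comparing the equations for $\delta_{2,h}u$ and $\delta_{2,h}w$ and using this cutoff property gives
\[
\Dels{s}_{\R^n_+}v = -g \quad\text{in } B(x_0,2\kappa r)^+.
\]
\Cref{la:sharpest2} at $(x_0,\kappa,r)$ then bounds the mean oscillation of $v$ on $B(x_0,r)^+$ by $\kappa^{-\alpha}$ times a weighted dyadic $L^2$-sum of $|v|^2$, plus $C(\kappa)\|g\|_{L^\infty(B(x_0,2\kappa r)^+)}$. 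Splitting $|v|^2 \le 2|\delta_{2,h}u|^2 + 2|\delta_{2,h}w|^2$: the first piece sums geometrically to $\mathcal{M}_+|\delta_{2,h}u|^2(x_0)$ (using $x_0\in\R^n_+$ so $|B(x_0,R)^+|\gtrsim|B(x_0,R)|$); the second, via \eqref{eq:sol3:goalest2}, produces $2^{-kn}|h|^{2(2s-t)}\mathcal{M}|G|^2(x_0)$ on the $k$-th annulus and hence sums to $|h|^{2(2s-t)}\mathcal{M}|G|^2(x_0)$; the $g$-term is handled by \eqref{eq:sol3:goalest1}. A crude $L^2$ bound on $\delta_{2,h}w$ takes care of its own oscillation contribution. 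This yields the target bound in this regime, with only $\mathcal{M}|G|^2(x_0)$ on the data side.

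The main obstacle is the small-scale regime $r \ll |h|/\kappa$, where no $\lambda$ works in \Cref{la:globalsol3}: one needs $\lambda>|h|$ for the lemma, but then $\lambda^n/(\kappa r)^n \gtrsim (|h|/r)^n \to \infty$ destroys the $\delta_{2,h}w$-$L^2$ estimate. The planned fix is a three-centre argument using the tangentiality of $h$. The identity $2u(y) = u(y+h)+u(y-h)-\delta_{2,h}u(y)$ together with the fact that tangential translations preserve $\R^n_+$ gives, for a suitable constant $c$,
\[
\mvint_{B(x_0,r)^+}\bigl|\delta_{2,h}u - c\bigr| \lesssim \sum_{z\in\{x_0,\,x_0+h,\,x_0-h\}} \mvint_{B(z,r)^+}\bigl|u - (u)_{B(z,r)^+}\bigr|.
\]
Each summand is handled by applying \Cref{la:localsol} at centre $z$ at scale $2\kappa r$ (producing $w_z$ such that $u-w_z$ is a local solution to the homogeneous equation on $B(z,2\kappa r)^+$) and then \Cref{la:sharpest2} at $z$. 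The regime condition $\kappa r \le |h|/4$ upgrades the resulting data factor $(\kappa r)^{2s-t}$ to $C(\kappa)|h|^{2s-t}$, and this is exactly the source of the three shifted $\mathcal{M}|G|^2(x_0)$ and $\mathcal{M}|G|^2(x_0 \pm h)$ in the statement. The technical crux is then to trade the shifted $(\mathcal{M}_+|u|^2(x_0\pm h))^{1/2}$ arising at $z = x_0 \pm h$ back into $(\mathcal{M}_+|\delta_{2,h}u|^2(x_0))^{1/2}$; this is done via the same identity $2u = u(\cdot+h)+u(\cdot-h)-\delta_{2,h}u$ combined with matching the principal-regime estimate at the threshold $r \sim |h|/\kappa$, which is where the bookkeeping is most delicate.

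Taking the supremum over $r > 0$ of the two-regime bound then yields \eqref{eq:280523:goal}.
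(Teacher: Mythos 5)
Your treatment of the large-scale regime $r \gtrsim |h|/\kappa$ matches the paper: apply \Cref{la:globalsol3}, use the cutoff so that $\delta_{2,h}\bigl((1-\eta)\Ds{t}G\bigr)$ vanishes on the inner ball, apply \Cref{la:sharpest2} to $\delta_{2,h}(u-w)$, and split the dyadic sum. This part is sound.

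The small-scale regime $\kappa r \lesssim |h|$ is where the proposal breaks. You identified the right obstacle for \Cref{la:globalsol3} (it requires $|h|<\lambda$, and forcing $\lambda \gtrsim |h| \gg \kappa r$ ruins the $L^2$ bound on $\delta_{2,h}w$), but then reached for a three-centre workaround that cannot be closed. Your inequality
\[
\mvint_{B(x_0,r)^+}\bigl|\delta_{2,h}u - c\bigr| \lesssim \sum_{z\in\{x_0,\,x_0\pm h\}} \mvint_{B(z,r)^+}\bigl|u - (u)_{B(z,r)^+}\bigr|
\]
converts oscillations of $\delta_{2,h}u$ into oscillations of $u$ itself. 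After applying \Cref{la:localsol} and \Cref{la:sharpest2} at each centre $z$, the ``good'' term is $\kappa^{-\alpha}\bigl(\mathcal{M}_+|u|^2(z)\bigr)^{1/2}$, not $\kappa^{-\alpha}\bigl(\mathcal{M}_+|\delta_{2,h}u|^2(x_0)\bigr)^{1/2}$. You flag the resulting trade as ``the technical crux,'' but there is no such trade: $\mathcal{M}_+|u|^2$ is not controlled by $\mathcal{M}_+|\delta_{2,h}u|^2$ plus data (take $u$ close to a constant, so $\delta_{2,h}u$ is negligible while $u$ is not). So the small-scale argument as written does not produce the stated right-hand side.

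The paper's route for $|h|\geq \kappa r$ avoids this entirely and is simpler than what you attempted: since $\delta_{2,h}u$ itself solves $\Dels{s}_{\R^n_+}(\delta_{2,h}u) = \Ds{t}\delta_{2,h}G$ in $\R^n_+$, one applies \Cref{la:localsol} \emph{once}, at centre $x_0$ with $\lambda = 2\kappa r$, to the data $\delta_{2,h}G \in L^p$. Note \Cref{la:localsol} imposes no constraint between $\lambda$ and $h$ — the constraint $|h|<\lambda$ belongs only to \Cref{la:globalsol3}. This gives a corrector $w$ with $r^{-n/2}\|w\|_{L^2} \lesssim (\kappa r)^{2s-t}\bigl(\mathcal{M}|\delta_{2,h}G|^2(x_0)\bigr)^{1/2}$, and since $\kappa r \leq |h|$ one upgrades $(\kappa r)^{2s-t}$ to $|h|^{2s-t}$; the three shifted maximal functions $\mathcal{M}|G|^2(x_0\pm h)$ arise by expanding $\delta_{2,h}G$, not from a three-centre decomposition of the solution. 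Then $v = \delta_{2,h}u - w$ solves the homogeneous equation on $B(x_0,\kappa r)^+$, and \Cref{la:sharpest2} yields the good term with $\mathcal{M}_+|\delta_{2,h}u|^2(x_0)$ directly, as required.
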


\begin{proof}[Proof of \Cref{pr:maxfctest}]
Fix some $h$ which is tangential, i.e. $h \in \R^{n-1}\times \{0\}$. Then we have
\[
\begin{cases}
  \Dels{s}_{\R^n_+} \delta_{2,h} u =  \Ds{t} \delta_{2,h} G \quad \text{in $\R^n_+$}\\
  \delta_{2,h} u = 0 \quad \text{on $\R^{n-1}\times \{0\}$}
\end{cases}
\]
Pick now any $x_0 \in \R^n_+$, $\kappa > 2$ and some $r > 0$. We consider two cases:

\underline{Assume $|h| \geq \kappa r$}
In this case we use \Cref{la:localsol} for $\lambda = 2\kappa r$ to solve,
\[
 \Dels{s}_{\R^n_+} w = \Ds{t} \delta_{2,h}G \quad \text{in $B(x_0,2\kappa r)^+$}
\]
which comes with the estimate
\[
\begin{split}
 (\kappa r)^{-\frac{n}{2}} \|w\|_{L^2(\R^n_+)} \aleq&  (\kappa r)^{2s-t} \brac{\mathcal{M}|\delta_{2,h}G|^2(x_0)}^{\frac{1}{2}}\\
 \aleq&  |h|^{2s-t} \brac{\brac{\mathcal{M}|G|^2(x_0)}^{\frac{1}{2}}+\brac{\mathcal{M}|G|^2(x_0-h)}^{\frac{1}{2}}+\brac{\mathcal{M}|G|^2(x_0+h)}^{\frac{1}{2}}}\\
 \end{split}
\]
That is
\begin{equation}\label{eq:24857346}
 r^{-\frac{n}{2}} \|w\|_{L^2(\R^n_+)} \aleq C(\kappa) |h|^{2s-t} \brac{\brac{\mathcal{M}|G|^2(x_0)}^{\frac{1}{2}}+\brac{\mathcal{M}|G|^2(x_0-h)}^{\frac{1}{2}}+\brac{\mathcal{M}|G|^2(x_0+h)}^{\frac{1}{2}}}
 \end{equation}
Consequently, 
\[
 \Dels{s}_{\R^n_+} (\delta_{2,h} u-w) = 0 \quad \text{in $B(x_0,\kappa r)^+$}.
\]
We apply \Cref{la:sharpest2} to $v:=\delta_{2,h} u-w$, and have
\[
   \brac{|v-(v)_{B(r)^+}|}_{B(r,x_0)^+} \aleq \kappa^{-\alpha} \brac{\sum_{k=0}^\infty 2^{-\sigma k} \mvint_{B(x_0,2^k \kappa r)} |v|^2 dx}^{\frac{1}{2}}.
\]
By triangular inequality we find
\[
\begin{split}
 &\brac{|\delta_{2,h}u-(\delta_{2,h} u)_{B(r)^+}|}_{B(r,x_0)^+} \\
 \aleq& \brac{|w-(w)_{B(r)^+}|}_{B(r,x_0)^+} \\
 &+ \kappa^{-\alpha} \brac{\sum_{k=0}^\infty 2^{-\sigma k} \mvint_{B(x_0,2^k \kappa r)^+} |\delta_{2,h} u|^2 dx}^{\frac{1}{2}} + \kappa^{-\alpha} \brac{\sum_{k=0}^\infty 2^{-\sigma k} \mvint_{B(x_0,2^k \kappa r)^+} |w|^2 dx}^{\frac{1}{2}}\\
\aleq& r^{-\frac{n}{2}} \|w\|_{L^2(\R^n_+)} \\
 &+ \kappa^{-\alpha} \brac{\sum_{k=0}^\infty 2^{-\sigma k} \mvint_{B(x_0,2^k \kappa r)^+} |\delta_{2,h} u|^2 dx}^{\frac{1}{2}} + \kappa^{-\alpha} \brac{\sum_{k=0}^\infty 2^{-\sigma k} (2^{k} \kappa r)^{-n} \|w\|_{L^2(\R^n_+)}^2}^{\frac{1}{2}}\\
 \aleq& C(\kappa) r^{-\frac{n}{2}} \|w\|_{L^2(\R^n_+)} + \kappa^{-\alpha} \brac{\sum_{k=0}^\infty 2^{-\sigma k} \mvint_{B(x_0,2^k \kappa r)^+} |\delta_{2,h} u|^2 dx}^{\frac{1}{2}}\\
\overset{\eqref{eq:24857346}}{\aleq}&C(\kappa) |h|^{2s-t} \brac{\brac{\mathcal{M}|G|^2(x_0)}^{\frac{1}{2}} +\brac{\mathcal{M}|G|^2(x_0+h)}^{\frac{1}{2}}
+\brac{\mathcal{M}|G|^2(x_0-h)}^{\frac{1}{2}}
} \\
&+ \kappa^{-\alpha} \brac{\sum_{k=0}^\infty 2^{-\sigma k} \mvint_{B(x_0,2^k \kappa r)^+ } |\delta_{2,h} u|^2 dx}^{\frac{1}{2}}\\
 \end{split}
\]
That is, we have shown
\begin{equation}\label{eq:280523:largeh}
\begin{split}
& \sup_{\kappa r \leq |h|}\brac{|\delta_{2,h}u-(\delta_{2,h} u)_{B(r)^+}|}_{B(r,x_0)^+} \\
\aleq& \kappa^{-\alpha} \brac{\mathcal{M}|\delta_{2,h} u|^2(x_0)}^{\frac{1}{2}} \\
&+C(\kappa) |h|^{2s-t} \brac{\brac{\mathcal{M}|G|^2(x_0)}^{\frac{1}{2}} +\brac{\mathcal{M}|G|^2(x_0+h)}^{\frac{1}{2}}+\brac{\mathcal{M}|G|^2(x_0-h)}^{\frac{1}{2}}}\\
 \end{split}
\end{equation}
We point out the similarity to \eqref{eq:280523:goal}.

\underline{We now assume $|h| \leq \kappa r$}. We need to get the same estimate as \eqref{eq:280523:largeh}.

In this case let $\eta \in C_c^\infty(B(x_0,10 \kappa  r)$, $\eta \equiv 1$ in $B(x_0,9 \kappa  r)$.

By \Cref{la:globalsol3} for $\lambda =  5 \kappa  r$ we find $w \in \dot{W}^{s,2}_0(\R^n_+)$, $\supp w \subset B(x_0,25 \kappa  r)$ that solves
\[
 \Dels{s}_{\R^n_+} \delta_{2,h} w = \delta_{2,h} \brac{\eta \Ds{t} G} +g\quad \text{in $B(x_0,15 \kappa  r) \cap \R^n_+$}
\]
and with the estimates 
\[
 \|g\|_{L^\infty(B(x_0,20 \kappa  r))} \aleq |h|^{2s-t}\, \brac{\mathcal{M} |G|^2(x_0)}^{\frac{1}{2}},
\]
and 
\[
 \|\delta_{2,h} w\|_{L^2(\R^n)} \aleq ( \kappa  r)^{\frac{d}{2}}\, |h|^{2s-t}\, \brac{\mathcal{M} |G|^2(x_0)}^{\frac{1}{2}}.
\]
We then have
\[
 \Dels{s}_{\R^n_+} \brac{ \delta_{2,h}(u-w)} = \delta_{2,h} \brac{(1-\eta)\Ds{t} G} - g \quad \text{in $B(x_0,15 \kappa  r)$}
\]
Since $|h| \leq \kappa r$, and $\eta \equiv 1$ in $B(x_0,9\kappa r)$ we have that
\[
 \delta_{2,h} \brac{(1-\eta)\Ds{t} G} = 0 \text{in $B(x_0,8 \kappa r\cap \R^n_+$)}.
\]
So if we set
\[
 v := \brac{ \delta_{2,h}u-\delta_{2,h}w}
\]
then 
\[
 \Dels{s}_{\R^n_+} v = -g  \quad \text{in $B(x_0,2\kappa r) \cap \R^n_+$}.
\]
We can apply \Cref{la:sharpest2}
\[
\begin{split}
   &\brac{|\delta_{2,h}u-(\delta_{2,h}u)_{B(r)^+}|}_{B(r,x_0)^+}\\
   \aleq&
   \kappa^{-\alpha} \brac{\sum_{k=0}^\infty 2^{-\sigma k} \mvint_{B(x_0,2^k \kappa r)} |\delta_{2,h}u|^2 dx}^{\frac{1}{2}}\\
   &+ \brac{|\delta_{2,h}w-(\delta_{2,h}w)_{B(r)^+}|}_{B(r,x_0)^+} + \kappa^{-\alpha} \brac{\sum_{k=0}^\infty 2^{-\sigma k} \mvint_{B(x_0,2^k \kappa r)} |\delta_{2,h}w|^2 dx}^{\frac{1}{2}}\\
   &+C(\kappa)\|g\|_{L^\infty(B(x_0,2\kappa r))}\\
   \aleq&   \kappa^{-\alpha} \brac{\mathcal{M}_+|\delta_{2,h}u|^2}^{\frac{1}{2}} + C(\kappa) r^{-\frac{n}{2}} \|\delta_{2,h}w\|_{L^2(\R^n_+)}+C(\kappa)\|g\|_{L^\infty(B(x_0,2\kappa r))}\\
   \aleq&\kappa^{-\alpha} \brac{\mathcal{M}_+|\delta_{2,h}u|^2}^{\frac{1}{2}} 
   + C(\kappa) |h|^{2s-t}\, \brac{\mathcal{M} |G|^2(x_0)}^{\frac{1}{2}}
   \end{split}
\]
which holds for all $\kappa r \geq |h|$. Combining this with \eqref{eq:280523:largeh}, we have shown
\[
 \begin{split}
& \sup_{r >0}\brac{|\delta_{2,h}u-(\delta_{2,h} u)_{B(r)^+}|}_{B(r,x_0)^+} \\
\aleq& \kappa^{-\alpha} \brac{\mathcal{M}|\delta_{2,h} u|^2(x_0)}^{\frac{1}{2}} \\
&+C(\kappa) |h|^{2s-t} \brac{\brac{\mathcal{M}|G|^2(x_0)}^{\frac{1}{2}} +\brac{\mathcal{M}|G|^2(x_0+h)}^{\frac{1}{2}}+\brac{\mathcal{M}|G|^2(x_0-h)}^{\frac{1}{2}}}\\
 \end{split}
\]
This concludes the proof of \eqref{eq:280523:goal}.
\end{proof}

With the same arguments (actually this is easier), we also find
\begin{proposition}\label{pr:maxfctest2}
Let $s \in (\frac{1}{2},1)$, $n \geq 2$. For any $\kappa \geq  2$ there exists $C(\kappa,s,n)$ such that if $u \in W^{s,2}_0(\R^n_+)$  solves
\[
 \Dels{s}_{\R^n_+} u = \Ds{t} G \quad \text{in $\R^n_+$}
\]
we have for any $x_0 \in \R^n_+$ and any $r > 0$
\begin{equation}\label{eq:280523:goal2}
\begin{split}
 \mathcal{M}^\#_+u(x_0) \aleq& \kappa^{-\alpha} \brac{\mathcal{M}_+|u|^2}^{\frac{1}{2}}(x_0) \\
   & +C(\kappa) \, \brac{\mathcal{M} |G|^2(x_0)}^{\frac{1}{2}}+
 \end{split}
\end{equation}
The constants in $\aleq$ depend only on $n$, $s$ (in particular they are independent on $f$ and $\kappa$).
\end{proposition}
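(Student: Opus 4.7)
The argument parallels the proof of Proposition~\ref{pr:maxfctest}, only simpler: with no discrete difference $\delta_{2,h}$ to track, the case distinction $|h|\geq\kappa r$ versus $|h|\leq\kappa r$ disappears and the proof reduces to a single ``freezing'' scheme. For each fixed $x_0\in\R^n_+$ and $r>0$, the plan is to split $u=v+w$ on a $\kappa$-dilate of $B(x_0,r)^+$, where $w$ absorbs the inhomogeneity $\Ds{t}G$ and $v:=u-w$ satisfies the homogeneous regional equation there; then the oscillation of $u$ on $B(x_0,r)^+$ is controlled by Lemma~\ref{la:sharpest2} applied to $v$ together with the energy bound on $w$ from Lemma~\ref{la:localsol}.

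\noindent\textbf{Main steps.} Fix $x_0\in\R^n_+$, $\kappa\geq 2$, $r>0$ and set $\lambda:=2\kappa r$. Step one: apply Lemma~\ref{la:localsol} to produce $w\in W^{s,2}_0(\R^n_+)$ with $\supp w\subset B(x_0,10\lambda)$ solving $\Dels{s}_{\R^n_+}w=\Ds{t}G$ in $B(x_0,\lambda)^+$ and satisfying $\lambda^{-n/2}\|w\|_{L^2(\R^n_+)}\aleq \lambda^{2s-t}(\mathcal{M}|G|^2(x_0))^{1/2}$. Step two: set $v:=u-w$, so that $\Dels{s}_{\R^n_+}v=0$ in $B(x_0,\lambda)^+=B(x_0,2\kappa r)^+$; then Lemma~\ref{la:sharpest2} gives
\[
\avint_{B(x_0,r)^+}\bigl|v-(v)_{B(x_0,r)^+}\bigr|\aleq \kappa^{-\alpha}\Bigl(\sum_{k=0}^{\infty}2^{-\sigma k}\mvint_{B(x_0,2^k\kappa r)^+}|v|^2\,dx\Bigr)^{1/2}.
\]
Step three: recover $u=v+w$ in both the left-hand oscillation (by triangle inequality and the trivial bound $\avint_{B(x_0,r)^+}|w-(w)_{B(x_0,r)^+}|\aleq r^{-n/2}\|w\|_{L^2(\R^n_+)}$) and in each tail integrand via $|v|^2\aleq |u|^2+|w|^2$. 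The $|u|^2$-pieces telescope against the geometric factor $2^{-\sigma k}$ and are dominated by a constant times $(\mathcal{M}_+|u|^2(x_0))^{1/2}$, yielding the $\kappa^{-\alpha}(\mathcal{M}_+|u|^2)^{1/2}(x_0)$ contribution on the right; every remaining $w$-piece collapses to $r^{-n/2}\|w\|_{L^2(\R^n_+)}$ up to a $\kappa$-dependent series constant, and is closed off by Step one, contributing the $C(\kappa)(\mathcal{M}|G|^2(x_0))^{1/2}$ term.

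\noindent\textbf{Main subtlety.} The only real care is the bookkeeping of the $\kappa$-dependence and the $\lambda^{2s-t}=(2\kappa r)^{2s-t}$ scaling inherited by $w$: the $w$-contribution carries positive powers of $\kappa$ from both the energy estimate and the summation over dyadic scales, all of which must be safely absorbed into the $C(\kappa)$ coefficient without corrupting the $\kappa^{-\alpha}$ prefactor multiplying $(\mathcal{M}_+|u|^2)^{1/2}(x_0)$. Unlike the proof of Proposition~\ref{pr:maxfctest}, neither the discrete-differentiation machinery of Lemma~\ref{la:globalsol3} nor any $r$-splitting argument is required, which is exactly why the authors note that this variant is easier.
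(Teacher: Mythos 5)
Your decomposition $u=v+w$ via Lemma~\ref{la:localsol} with $\lambda=2\kappa r$, followed by Lemma~\ref{la:sharpest2} for the homogeneous part, is indeed the natural analogue of the $|h|\geq\kappa r$ branch of Proposition~\ref{pr:maxfctest}, and it is what the paper's terse ``with the same arguments (actually this is easier)'' points at. The structural observation that no case split and no Lemma~\ref{la:globalsol3} machinery is needed is also correct.

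However, there is a genuine gap in Step three that you do not close. Lemma~\ref{la:localsol} gives $\lambda^{-n/2}\|w\|_{L^2(\R^n_+)}\aleq \lambda^{2s-t}\bigl(\mathcal{M}|G|^2(x_0)\bigr)^{1/2}$, so after converting to $r^{-n/2}\|w\|_{L^2}$ you are left with $(2\kappa)^{n/2}(2\kappa r)^{2s-t}\bigl(\mathcal{M}|G|^2(x_0)\bigr)^{1/2}$. The factor $r^{2s-t}$ is not a ``positive power of $\kappa$'' that can be stowed into $C(\kappa)$; since $2s-t>0$ it blows up as $r\to\infty$, and $\mathcal{M}^\#_+u(x_0)$ is a supremum over all $r>0$. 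In Proposition~\ref{pr:maxfctest} this scale factor is disarmed precisely because one is in the regime $\kappa r\leq |h|$ (so $(\kappa r)^{2s-t}\leq|h|^{2s-t}$), and in the complementary regime one resorts to Lemma~\ref{la:globalsol3}, whose $\delta_{2,h}$-gain produces $|h|^{2s-t}$ directly with no $r$-dependence. Neither mechanism is available here, because there is no $h$ to dominate $r$. So as written, your argument proves an $r$-weighted oscillation estimate, not the $r$-uniform bound \eqref{eq:280523:goal2}.

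You should also be aware that this is not merely a bookkeeping slip on your side: the target estimate itself is dimensionally inconsistent. Under the scaling $u\mapsto u(\lambda\cdot)$, $G\mapsto\lambda^{2s-t}G(\lambda\cdot)$ of the equation $\Dels{s}_{\R^n_+}u=\Ds{t}G$, both $\mathcal{M}^\#_+u(x_0)$ and $\bigl(\mathcal{M}_+|u|^2\bigr)^{1/2}(x_0)$ scale like $u$, whereas $\bigl(\mathcal{M}|G|^2\bigr)^{1/2}(x_0)$ scales like $\lambda^{2s-t}$ times $u$; the ensuing $L^p$ consequence ``$\|u\|_{L^p}\aleq\|G\|_{L^p}$'' in Section~5 fails for the same reason. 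The dangling ``$+$'' at the end of \eqref{eq:280523:goal2} in the paper suggests a dropped factor (most plausibly $r^{2s-t}$ or a cutoff on the radii over which the sup is taken). Your argument genuinely produces the bound with that extra $r^{2s-t}$ factor, so the honest conclusion is that you have reproduced what the ``same argument, easier'' route actually yields, but that this falls short of \eqref{eq:280523:goal2} as literally stated --- and you should flag that, rather than silently folding $r^{2s-t}$ into $C(\kappa)$.
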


\section{Tangential improvement -- Proof of Theorem~\ref{th:tangential}}\label{s:tangential}
Since by assumption $u \in L^2(\R^n)$, by Sobolev embedding $u \in L^p(\R^n)$ for any $p \in [2,\frac{2n}{n-2s}]$. Since $n \geq 2$ and $s < 1$, $\frac{2n}{n-2s} < \infty$.

For $p \in (2,\frac{2n}{n-2s}]$ we can integrate the inequality \eqref{eq:280523:goal} from \Cref{pr:maxfctest} in $L^p(\R^n)$. Since $\delta_{2,h} u  \in L^p(\R^n)$, we can apply the Fefferman-Stein inequality \Cref{th:feffermanstein} and the maximal theorem (here we need $p \geq 2$) to obtain 

\[
\|\delta_{2,h}u \|_{L^p(\R^n_+)} \aleq \kappa^{-\alpha} \|\delta_{2,h}u\|_{L^p(\R^n_+)} +C(\kappa) |h|^{2s-t}\, \|G\|_{L^p(\R^n_+)}.
\]
Taking $\kappa$ suitably large, we find 
\[
\|\delta_{2,h}u \|_{L^p(\R^n_+)} \aleq |h|^{2s-t}\, \|G\|_{L^p(\R^n_+)}.
\]
Similarly, the estimate from \Cref{pr:maxfctest2} implies
\[
\|u \|_{L^p(\R^n_+)} \aleq \|G\|_{L^p(\R^n_+)}.
\]

In view of \Cref{la:del2todel1} we conclude that for any $\tilde{t} > t$
\[
\sup_{h \in \R^{n-1}\times \{0\}} |h|^{t-2s} \|\delta_{h} u \|_{L^p(\R^n_+)} \aleq \|G\|_{L^p(\R^n_+)}.
\]

In view of \Cref{la:tangentialest} we conclude for any $\tilde{t}_2 > \tilde{t}$, $2s - 1 < \tilde{t}_2$
\[
\begin{split}
 &\brac{\int_0^\infty \int_{\R^{n-1}} \int_{\R^{n-1}} \frac{|u(x',{x_n})-u(y',{x_n})|^p}{|x'-y'|^{n-1+(2s-\tilde{t})p}} dx'\, dy' \, dx_n}^{\frac{1}{p}} \\
 =&  c\brac{\int_{\R^n_+} \int_{\R^n_+} \frac{|u(x',{x_n})-u(y',{x_n})|^p}{|x-y|^{n+(2s-\tilde{t})p}} dx' dx_n dy' dy_n}^{\frac{1}{p}} \\
 \aleq& \|G\|_{L^p(\R^n_+)}.
 \end{split}
\]
\section{The normal direction: Proof of Theorem~\ref{th:normal}}\label{s:normal}
Using the notation $\R^n \ni x = (x',x_n)$, $x' \in \R^{n-1}$, $x_n \in \R$, we set 
\[
\begin{split}
 [f]_{W^{t,p}_T(\R^n_+)} :=& \brac{\int_{\R^n_+}\int_{\R^n_+} \frac{|f(x',x_n)-f(y',x_n)|^{p}}{|x-y|^{n+2s}}\, dx\, dy}^{\frac{1}{p}}\\
 \aeq& \brac{\int_{\R^{n-1}}\int_{\R^n_+} \frac{|f(x',x_n)-f(y',x_n)|^{p}}{|x'-y'|^{n-1+2s}}\, dx\, dy'}^{\frac{1}{p}}
 \end{split}
\]
and
\[
\begin{split}
 [f]_{W^{t,p}_N(\R^n_+)} :=& \brac{\int_{\R^n_+}\int_{\R^n_+} \frac{|f(x',x_n)-f(x',y_n)|^{p}}{|x-y|^{n+2s}}\, dx\, dy}^{\frac{1}{p}}\\
 \aeq& \brac{\int_{0}^\infty \int_{\R^n_+} \frac{|f(x',x_n)-f(x',y_n)|^{p}}{|x_n-y_n|^{1+2s}}\, dx\, dy_n}^{\frac{1}{p}}
 \end{split}
\]
By triangular inequality we have 
\[
 [f]_{W^{t,p}(\R^n_+)} \aleq [f]_{W^{t,p}_N(\R^n_+)} + [f]_{W^{t,p}_T(\R^n_+)} 
\]
If $f$ can be written as $f(x) = \eta(x')\psi(x_n)$, then we find
\[
 [f]_{W^{t,p}(\R^n_+)}  \aleq \|\psi\|_{L^p((0,\infty))}\, [\eta]_{W^{s,p}(\R^{n-1})} + \|\psi\|_{W^{s,p}((0,\infty)}\, \|\eta\|_{L^p(\R^{n-1})}
\]
Assume now that 
\[
 u \in W^{s,2}_0(\R^n_+) \cap W^{\tilde{s},p}_T(\R^n_+)
\]
solves 
\[
 \Dels{s} u = g \quad \text{in $\R^{n}_+$}.
\]
Let $\varphi \in C_c^\infty(\R^n_+)$ and test the equation for $u$ with $\varphi$ to obtain
\[
 \int_{\R^n_+} \int_{\R^n_+} \frac{(u(x',x_n)-u(y',x_n))\, (\varphi(x)-\varphi(y))}{|x-y|^{n+2s}}\, dx\, dy = \tilde{G}[\varphi],
\]
where 
\begin{equation}\label{eq:newpde} 
\int_{\R^n_+} \int_{\R^n_+} \frac{(u(y',x_n)-u(y',y_n))\, (\varphi(x)-\varphi(y))}{|x-y|^{n+2s}}\, dx\, dy = \tilde{G}[\varphi]
\end{equation}
where 
\[
  \tilde{G}[\varphi] = g[\varphi] + \int_{\R^n_+} \int_{\R^n_+} \frac{(u(x',x_n)-u(y',x_n))\, (\varphi(x)-\varphi(y))}{|x-y|^{n+2s}}\, dx\, dy.
\]
Observe we then have, if $0 \leq t < 2s-\tilde{s}$,
\[
 |\tilde{G}[\varphi]| \aleq \brac{\|g\|_{H^{-t,p}(\R^n)} + [u]_{W_T^{\tilde{s},p}(\R^n_+)}} [\varphi]_{W^{2s-\tilde{s},p'}(\R^n)}.
\]
Fix $\eta \in C_c^\infty(\R^{n-1})$  and set
\[
 v(x_n) := \int_{\R^{n-1}} u(z',x_n)\, dz',
\]
and for $\psi \in C_c^\infty((0,\infty))$ set
\[
\tilde{H}(\psi) := \tilde{G}(\tilde{\psi}),
\]
where we set 
\[
 \tilde{\psi}(x) := \psi(x_n) \eta(x')
\]
Then \eqref{eq:newpde} becomes 
\[
 \Dels{s}_{(0,\infty)} v[\psi] = \tilde{H}(\psi) \quad \forall \psi \in C_c^\infty((0,\infty)),
\]
that is 
\[
  \Dels{s}_{(0,\infty)} v = \tilde{H} \quad \text{in $(0,\infty)$}.
\]
Observe that as long as $2s-\tilde{s} > \frac{1}{2}$, using \Cref{la:Ws0halfspace},
\[
\begin{split}
 |\tilde{H}(\psi)| \aleq& \brac{\|g\|_{H^{-t,p}(\R^n)} + [u]_{W_T^{\tilde{s},p}(\R^n_+)}} [\eta \psi]_{W^{2s-\tilde{s},p'}(\R^n)}\\
   \aleq&\brac{\|g\|_{H^{-t,p}(\R^n)} + [u]_{W_T^{\tilde{s},p}(\R^n_+)}} \brac{\|\eta\|_{L^{p'}(\R^n)} [\psi]_{W^{2s-\tilde{s},p'}((0,\infty))} + [\eta]_{W^{2s-\tilde{s},p'}(\R^{n-1})\, \|\psi\|_{L^{p'}((0,\infty))}}}.
 \end{split}
\]
That is 
\[
 \tilde{H} \in \brac{W^{2s-\tilde{s},p'}_0((0,\infty))}^\ast =W^{\tilde{s}-2s,p}((0,\infty)).
\]

\bibliographystyle{abbrv}%
\bibliography{bib}%

\begin{thebibliography}{10}

\bibitem{AFLY21}
B.~{Abdellaoui}, A.~J. {Fern{\'a}ndez}, T.~{Leonori}, and A.~{Younes}.
\newblock {Global fractional Calder{\'o}n-Zygmund regularity}.
\newblock {\em arXiv e-prints}, page arXiv:2107.06535, July 2021.

\bibitem{BL17}
L.~Brasco and E.~Lindgren.
\newblock {Higher {S}obolev regularity for the fractional {$p$}-{L}aplace
  equation in the superquadratic case}.
\newblock {\em Adv. Math.}, 304:300--354, 2017.

\bibitem{ChenHuyuan15}
H.~Chen.
\newblock The {D}irichlet elliptic problem involving regional fractional
  {L}aplacian.
\newblock {\em J. Math. Phys.}, 59(7):071504, 19, 2018.

\bibitem{DK12}
H.~Dong and D.~Kim.
\newblock On {$L_p$}-estimates for a class of non-local elliptic equations.
\newblock {\em J. Funct. Anal.}, 262(3):1166--1199, 2012.

\bibitem{Dyda04}
B.~Dyda.
\newblock A fractional order {H}ardy inequality.
\newblock {\em Illinois J. Math.}, 48(2):575--588, 2004.

\bibitem{Fall22}
M.~M. Fall.
\newblock Regional fractional {L}aplacians: boundary regularity.
\newblock {\em J. Differential Equations}, 320:598--658, 2022.

\bibitem{FRO22}
M.~M. Fall and X.~Ros-Oton.
\newblock Global {S}chauder theory for minimizers of the {$H^s(\Omega)$}
  energy.
\newblock {\em J. Funct. Anal.}, 283(3):Paper No. 109523, 50, 2022.

\bibitem{FS72}
C.~Fefferman and E.~M. Stein.
\newblock {$H^{p}$} spaces of several variables.
\newblock {\em Acta Math.}, 129(3-4):137--193, 1972.

\bibitem{Grubb15}
G.~Grubb.
\newblock Fractional {L}aplacians on domains, a development of
  {H}\"{o}rmander's theory of {$\mu$}-transmission pseudodifferential
  operators.
\newblock {\em Adv. Math.}, 268:478--528, 2015.

\bibitem{Grubb22}
G.~Grubb.
\newblock Fourier methods for fractional-order operators.
\newblock {\em Preprint, arXiv:2208.07175}, 2022.

\bibitem{LL2022}
C.~Li and C.~Liu.
\newblock {On the Dirichlet problem for fractional Laplace equation on a
  general domain}.
\newblock {\em arXiv:2206.12546}, 2022.

\bibitem{MSY21}
T.~Mengesha, A.~Schikorra, and S.~Yeepo.
\newblock Calderon-{Z}ygmund type estimates for nonlocal {PDE} with
  {H}\"{o}lder continuous kernel.
\newblock {\em Adv. Math.}, 383:Paper No. 107692, 64, 2021.

\end{thebibliography}

\end{document}